\documentclass{article}

\usepackage[margin = 1in]{geometry}

\usepackage{titlesec} 

\usepackage{tikz}
\usepackage{tikz-cd}
\usetikzlibrary{cd, arrows.meta, calc}
\usepackage{url, hyperref}
\usepackage{float}

\tikzset{->-/.style={decoration={
  markings,
  mark=at position .45 with {\arrow{>}}},postaction={decorate}}}
\usepackage{amsmath}
\usepackage{stmaryrd}
\usepackage{amssymb}
\usepackage{enumitem}
\usepackage{graphicx}
\usepackage{mathdots}
\usepackage{color}
\usepackage{diagbox}
\usepackage{array, makecell}
\usepackage{rotating}
\usepackage{amsthm}
\usepackage{dsfont}
\usepackage{adjustbox}
\usetikzlibrary{arrows}
\usepackage{mathtools}

\def\cM{{\overline{\mathcal{M}}}}

\def\cW{\mathcal{W}}

\def\oM{\overline{\mathcal{M}}}
\def\cM{{\mathcal{M}}}

\def\Z{\mathbb{Z}}

\def\R{\mathbb{R}}

\def\b1{{\bf 1}}

\def\Aut{{\rm Aut}}

\def\P{\mathbb{P}}

\newcount\colveccount
\newcommand*\colvec[1]{
        \global\colveccount#1
        \begin{pmatrix}
        \colvecnext
}
\def\colvecnext#1{
        #1
        \global\advance\colveccount-1
        \ifnum\colveccount>0
                \\
                \expandafter\colvecnext
        \else
                \end{pmatrix}
        \fi
}

\usepackage{amsthm}
\usepackage{esvect}
\newtheorem{definition}{Definition}[section]
\newtheorem{theorem}[definition]{Theorem}

\newtheorem{proposition}[definition]{Proposition}

\newtheorem{lemma}[definition]{Lemma}
\newtheorem{remark}[definition]{Remark}

\usepackage{amsmath,calligra,mathrsfs}
\DeclareMathOperator{\cHom}{\mathscr{H}\text{\kern -3pt {\calligra\large om}}\,}

\usepackage{blkarray}
\usepackage{booktabs}
\usepackage{verbatim}
\usepackage{subcaption}

\newcommand{\tropC}{\mathsf{C}}
\newcommand{\defic}[1]{\operatorname{def}(#1)}
\newcommand{\ov}[1]{\operatorname{ov}(#1)}
\newcommand{\val}[1]{\operatorname{val}(#1)}

\newcommand{\W}{\mathsf{W}}

\newcommand{\nn}{n^{\operatorname{Aut}}}

\definecolor{lightgreen}{HTML}{E8F9E2}
\definecolor{lightblue}{HTML}{ADD8E6}
\definecolor{lightcyan}{HTML}{E0FFFF}
\definecolor{lightred}{HTML}{ffcccb}
\definecolor{lightpink}{HTML}{ffe6e6}
\definecolor{coralpink}{HTML}{F88379}
\definecolor{burntsiena}{HTML}{E97451}
\definecolor{forestgreen}{HTML}{228B22}
\definecolor{skyblue}{HTML}{448EE4}
\definecolor{darkgreen}{HTML}{006400}
\definecolor{darkblue}{HTML}{00008B}
\definecolor{darkorchid}{HTML}{9932CC}

\title{Tropical and algebraic elliptic plane curves with fixed $j$-invariant}

\author{Alessio Cela and Sae Koyama}

\date{ }

\usepackage{graphicx}

\begin{document}

\maketitle

\begin{abstract}

We tropically enumerate planar well-spaced elliptic curves with a fixed $j$-invariant. Combined with the recent genus 1 correspondence theorem of Cela--Koyama, this yields an alternative proof of Pandharipande's algebraic enumeration.

\end{abstract}
\tableofcontents

\section{Introduction}

Pandharipande \cite{Pandh97} computed via algebraic methods the number $E_{d,j}$ of degree $d$ projective plane curves through $3d-1$ points with a fixed $j$-invariant. For $j \neq 0, 1728$:
$$
E_{d,j} = \binom{d-1}{2} N_d
$$
For $j \in \{0, 1728\}$, this count is adjusted by a factor of $1/3$ and $1/2$, respectively, due to extra automorphisms. Partial prior results were also obtained in \cite{Alu, DF}. Here $N_d$ denotes the number of degree $d$ planar rational curves through $3d-1$ general points computed recursively by the celebrated Kontsevich's formula \cite{Kont}. See also \cite{GM08} for a tropical proof. 

The purpose of this paper is to prove Pandharipande's formula tropically, correcting an existing oversight in the literature \cite{KM09, LR18}. Kerber--Markwig \cite{KM09} constructed a weighted polyhedral complex that parametrizes certain tropical elliptic curves. They showed that the corresponding weighted enumeration of tropical elliptic curves with a fixed $j$-invariant is well-defined and matches Pandharipande's formula. Their complex is formed from the moduli space of tropical curves in $\P^2$ by discarding cones of dimension bigger than expected and assigning appropriate multiplicities to the remaining maximal cones. As explained by Cela-Koyama \cite{CK}, neither the surviving cones nor their assigned weights directly reflect the actual counts of algebraic curves lifting to those tropical types. Rather, the multiplicities were chosen in an ad hoc manner to ensure that the corresponding invariant is independent of the choice of $j$ and the interpolation points. Consequently, without directly inspecting the two formulas, there is no obvious structural reason why these two counts should coincide, nor can one deduce Pandharipande's enumeration from the tropical one. 

Building on the moduli space of well-spaced tropical curves introduced in \cite{Tor14, RSWII}, Cela–Koyama derived an explicit combinatorial formula for the automorphism-weighted count of algebraic curves lifting maximally degenerate well-spaced elliptic tropical curves. This equips the corresponding cone complex of well-spaced tropical curves with geometric weights, yielding a tropical correspondence theorem for elliptic curves in \textit{any} toric variety. In the present paper, we specialize their correspondence theorem to dimension two, specifically for $\mathbb{P}^2$. When the cycle of the tropical curve does not span all of $\mathbb{R}^2$, the multiplicities appearing in Cela--Koyama \cite{CK} differ from those used by Kerber--Markwig \cite{KM09}. In fact, even the tropical curves appearing in the two cone complexes are different. 

We remark that Len–Ranganathan \cite{LR18} also constructed a weighted polyhedral cone complex parametrizing certain genus-1 tropical curves and established a correspondence theorem for elliptic curves on toric surfaces, which they subsequently used to enumerate elliptic curves with a fixed $j$-invariant on Hirzebruch surfaces. However, their cone complex differs from the cone complex of well-spaced tropical curves in \cite{CK}, notably in the presence or absence of certain cones. This may create some inaccuracies in certain curve counts in toric surfaces. The work of \cite{LR18} predated the formal introduction of the moduli stack of well-spaced curves by Ranganathan, Santos-Parker, and Wise \cite{RSWII}, which serves as the starting point for the Cela--Koyama correspondence theorem \cite{CK}. Tables \ref{tab: comparing weights deficiency 1} and \ref{tab: comparing weights deficiency 2} (adapted from \cite{CK}) illustrate some of these differences.  

\begin{table}[H]
    \centering
    \renewcommand{\arraystretch}{1.5}
    \begin{tabular}{@{}lcc@{}}
        \toprule
        &
        \begin{tikzpicture}[scale=0.8]
            \fill[white] (0,1.2) circle[radius=2pt];
            \fill (0,0) circle[radius=2pt];
            \fill (2,0) circle[radius=2pt];
            \fill (1.5,0) circle[radius=2pt];
            \draw (0, 0.05) -- (1.5, 0.05);
            \draw (0, -0.05) -- (1.5, -0.05);
            \draw (1.5, 0) -- (2,0);
            \draw (0, 0) -- (-1, 1); 
            \draw (0, 0) -- (-1, -1); 
            \draw (2, 0) -- (3, 1); 
            \draw (2, 0) -- (3, -1);

            \draw[forestgreen, ->] (0,0.15) -- (-0.5, 0.65);
            \node[forestgreen] at (-0.5,0.8) {$u$};

            \draw[forestgreen, ->] (0.2, 0.15) -- (1, 0.15); 
            \node[forestgreen] at (0.6, 0.35) {$a n_1$};

            \draw[forestgreen, ->] (0.2, -0.15) -- (1, -0.15);
            \node[forestgreen] at (0.6, -0.4) {$b n_1$};
            
            \node at (-0.4,0) {$V_1$};
        \end{tikzpicture} 
        &
        \begin{tikzpicture}[scale=0.8]
            \fill (0,0) circle[radius=2pt];
            \fill (2,0) circle[radius=2pt];
            \fill (0.5,0) circle[radius=2pt];
            \fill (1.5,0) circle[radius=2pt];
            \draw (0, 0) -- (0.5, 0);
            \draw (0.5, 0.05) -- (1.5, 0.05); 
            \draw (0.5, -0.05) -- (1.5, -0.05); 
            \draw (1.5, 0) -- (2, 0); 
            \draw (0, 0) -- (-1, 1); 
            \draw (0, 0) -- (-1, -1); 
            \draw (2, 0) -- (3, 1); 
            \draw (2, 0) -- (3, -1);
            \node at (0.3,0.3) {$l$};
            \node at (1.7,0.3) {$l$};

            \draw[forestgreen, ->] (0.6, 0.15) -- (1.4, 0.15); 
            \node[forestgreen] at (1, 0.35) {$a n_1$};

            \draw[forestgreen, ->] (0.6, -0.15) -- (1.4, -0.15);
            \node[forestgreen] at (1, -0.4) {$b n_1$};
        \end{tikzpicture} 
        \\
        \midrule
        multiplicities in \cite{CK} & $\gcd(a,b) \times (|\det(u, n_1)| - 1)/\Aut(\sigma').$ & $\gcd(a,b)/\Aut(\sigma').$ \\
        multiplicities in \cite{KM09} & $\gcd(a,b) \times |\det(u, n_1)|/\Aut(\sigma').$ & 0 \\
        multiplicities in \cite{LR18} & not a cone in the cone complex of tropical curves & $\gcd(a,b)/\Aut(\sigma').$ \\
        \bottomrule
    \end{tabular}
    \vspace{0.2cm}
    \caption{Comparing multiplicities of curves with deficiency $1$.}
    \label{tab: comparing weights deficiency 1}
\end{table}

\begin{table}[H]
    \centering
    \renewcommand{\arraystretch}{1.5}
    \begin{tabular}{@{}lcc@{}}
        \toprule
        &
        \begin{tikzpicture}[scale=0.8]
            \fill[white] (0,1.2) circle[radius=2pt];
            \fill (0,0) circle[radius=2pt];
            \fill (2,0) circle[radius=2pt];
            \draw (0, 0) -- (2,0);
            \draw (0, 0) -- (-1, 1); 
            \draw (0, 0) -- (-1, -1); 
            \draw (2, 0) -- (3, 1); 
            \draw (2, 0) -- (3, -1);
            \draw[dashed] (0,0)  to[in=50,out=130,loop, style={min distance=12mm}] (0,0);

            \node at (-0.4,0) {$V_1$};
        \end{tikzpicture} 
        &
        \begin{tikzpicture}[scale=0.8]
            \fill (0,0) circle[radius=2pt];
            \fill (2,0) circle[radius=2pt];
            \draw (0, 0) -- (2,0);
            \draw (1.5, 0) -- (2, 0); 
            \draw (0, 0) -- (-1, 1); 
            \draw (0, 0) -- (-1, -1); 
            \draw (2, 0) -- (3, 1); 
            \draw (2, 0) -- (3, -1);
            \draw[dashed] (1,0)  to[in=50,out=130,loop, style={min distance=12mm}] (1,0);
            \node at (0.3,0.3) {$l$};
            \node at (1.7,0.3) {$l$};

            \draw[forestgreen, ->] (1,0) -- (0.3, 0);
            \draw[forestgreen, ->] (1,0) -- (1.7, 0);

            \node[forestgreen] at (1.4, -0.3) {$d n_1$};

            \fill (1,0) circle[radius=2pt];
        \end{tikzpicture} 
        \\
        \midrule
        multiplicities in \cite{CK}  & $i(\text{dual polygon at $V_1$})$ & $(d-1)/2$ \\
        multiplicities in \cite{KM09}  & $\text{Area}(\text{dual polygon at } V_1) - \frac{1}{2}$ & 0 \\
        multiplicities in \cite{LR18}   & $i(\text{dual polygon at $V_1$})$ & $(d-1)/2$ \\
        \bottomrule
    \end{tabular}
    \vspace{0.2cm}
    \caption{Comparing multiplicities of curves with deficiency $2$.}
    \label{tab: comparing weights deficiency 2}
\end{table}

Ideally, a tropical enumeration should correspond to the algebraic one. In this paper we recover Kerber-Markwig's formulas using the algebraically grounded multiplicities from \cite{CK}. 

The main theorem of this paper is the following.

\begin{theorem}\label{thm: main}
    For general points in $\P^2$ and general $j$-invariant, we have
    \begin{equation}\label{eqn: main identity}
    E_{d,j}= E_{d,j}^{\mathrm{trop}}= \binom{d-1}{2}N_d 
    \end{equation}
    Moreover, one has 
    \begin{equation}\label{eqn: main-identity-2}
    E_{d,j}^{\mathrm{trop}}= \sum_{\tropC} \bigg( \sum_T (2\mathrm{Area}(T)^2- \frac{1}{2}) \mathrm{mult}(\tropC) \bigg) 
    \end{equation}
    where $\tropC$ ranges over all rational tropical curves passing through the $3d - 1$ points, $T$ ranges over all triangles in the Newton subdivision dual to $\tropC$ and $\mathrm{mult}(\tropC)$ denotes Mikhalkin's multiplicity of $\tropC$.
\end{theorem}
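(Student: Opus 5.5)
The plan follows the Kerber--Markwig strategy: choose the fixed $j$-invariant so that $\log|j|$ is very large. Then every tropical elliptic curve through the $3d-1$ points with that tropical $j$-invariant (the cycle length) has a very long cycle. The first equality $E_{d,j}=E^{\mathrm{trop}}_{d,j}$ is taken from the Cela--Koyama correspondence theorem \cite{CK}, specialized to $\P^2$. The count is independent of the general choice of $j$ and of the points, so I am free to make this degenerate choice. It then suffices to prove \eqref{eqn: main-identity-2}. The identity $\sum_{\tropC}\sum_T(2\mathrm{Area}(T)^2-\tfrac12)\mathrm{mult}(\tropC)=\binom{d-1}{2}N_d$ follows exactly as in \cite{KM09}, via Pick's formula and the count of interior lattice points absorbed into each triangle, together with Mikhalkin's correspondence for $N_d$.

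\textbf{Step 1: reduction to rational tropical curves.} Fix general points and a huge cycle length $\ell$. I would show that every well-spaced elliptic tropical curve in the support of the \cite{CK} weights through the points arises from a rational tropical curve $\tropC$ through the same points by one of the following local modifications.

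\emph{(a) Trivalent vertex dual to a triangle $T$.} Replace a trivalent vertex $V$ of $\tropC$, dual to $T$, by a long cycle, or "pinch" it. This covers both the case where the cycle spans $\R^2$ and the deficient configurations at $V$.

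\emph{(b) Bounded or unbounded edge.} Attach a degenerate (flat or contracted-loop) cycle along an edge of $\tropC$. These are the configurations of Tables \ref{tab: comparing weights deficiency 1} and \ref{tab: comparing weights deficiency 2}.

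The argument is a dimension count. Once $\ell\to\infty$, the cycle must shrink to a point or collapse onto a segment, and forgetting it leaves a rational curve meeting the point conditions. Since the points are general, that rational curve is trivalent, and its Mikhalkin multiplicity $\mathrm{mult}(\tropC)$ factors out of the lattice-index computation for the whole elliptic curve.

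\textbf{Step 2: sum the \cite{CK} weights over the modifications of one $\tropC$.} For each fixed $\tropC$, I would add up the \cite{CK} multiplicities over all modifications with the prescribed cycle length. The target is
$$\sum_T\left(2\mathrm{Area}(T)^2-\tfrac12\right)\mathrm{mult}(\tropC).$$

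\emph{Vertex modifications.} At a vertex dual to $T$, the non-deficient cycles spanning $\R^2$ contribute a number of solutions proportional to $\mathrm{mult}_V=2\mathrm{Area}(T)$, counted by the lattice index of the length constraint. Together with the deficient cycles at $V$, these should give $(2\mathrm{Area}(T))^2$ times the remaining factors, up to the correction $-\tfrac12$.

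\emph{Edge modifications.} Contributions along edges, such as the $(d-1)/2$ and $\gcd(a,b)$ terms with $|\det(u,n_1)|-1$ instead of $|\det(u,n_1)|$, must be redistributed to the adjacent vertices. Pick's theorem then turns $i(T)$-type terms into $\mathrm{Area}(T)-\tfrac{1}{2}(\text{boundary points})+1$, and summing over all edges and vertices should telescope to the claimed per-triangle expression.

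\textbf{Main obstacle.} I expect the hardest part to be this bookkeeping in Step 2. The \cite{CK} weights differ from the \cite{KM09} weights exactly on deficient cones: the $-1$ in $|\det(u,n_1)|-1$, and the extra cones with weights $\gcd(a,b)$ and $(d-1)/2$ that \cite{KM09} assigns weight $0$. I must show these discrepancies cancel once summed over all cones lying over a fixed $\tropC$. To do this, I would group contributions by each edge $e$ of $\tropC$ of weight $w$. The goal is a local identity: the sum of \cite{CK} weights of all cycles attached along or at the ends of $e$ equals the corresponding sum of \cite{KM09} weights. I would check this first via the identity $\sum_{k=1}^{w-1}\gcd(k,w-k)$-type sums against $|\det|$ counts, and only then invoke \cite{KM09}'s final computation.
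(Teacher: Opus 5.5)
There is a genuine gap: you fix a huge cycle length $\ell$, but the geometry in Step 1 and the per-curve target in Step 2 belong to the \emph{small}-$j$ degeneration.

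\textbf{What happens at large $j$.} For $\ell$ large with the points fixed, Proposition~\ref{prop: contracted edge} shows that the cycle contains a single long contracted edge. Deficiency-$1$ cycles (those collapsed onto a segment) never contribute. Besides contracted loops at $5$-valent vertices or on edges, the deficiency-$0$ contributions come from joining two \emph{different} edges of $\tropC'$ whose images cross by a contracted bridge. This is a non-local modification missing from your list (a)/(b), and its cycle image is a genuine loop, not a point or a segment. Consequently, the total over the cones lying above a fixed rational $\tropC'$ is $\binom{d-1}{2}\mathrm{mult}(\tropC')$, not $\sum_T(2\mathrm{Area}(T)^2-\frac12)\mathrm{mult}(\tropC')$. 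This is Proposition~\ref{prop:largej}: crossings contribute parallelogram areas, $5$-valent loops contribute $i(T)$, loops on edges contribute $w(e)-1$, and these add up to $i(\Delta_d)$. The two totals differ curve by curve. A rational cubic whose dual subdivision is one area-$1$ parallelogram plus seven unimodular triangles has triangle sum $0$. At large $j$ it contributes $1$, and this comes entirely from the crossing modification you omitted. So Step 2 is false in the regime you chose.

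\textbf{The final identity is not local.} The same example shows that $\sum_{\tropC}\sum_T(2\mathrm{Area}(T)^2-\frac12)\mathrm{mult}(\tropC)=\binom{d-1}{2}N_d$ cannot be checked curve by curve via Pick's formula. It holds only after summing over all curves. In \cite{KM09} it comes from evaluating one $j$-independent count in both the large-$j$ and small-$j$ limits.

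\textbf{What is missing.} You need the two-regime argument run with the \cite{CK} weights:
\begin{enumerate}
\item $j$-independence of $E^{\mathrm{trop}}_{d,j}$ follows from $E_{d,j}=E^{\mathrm{trop}}_{d,j}$, since $E_{d,j}$ is an algebraic degree.
\item The large-$j$ computation (contracted bridges at crossings and contracted loops, summed using the interior points of $\Delta_d$) gives $\binom{d-1}{2}N_d$ via Mikhalkin.
\item The small-$j$ computation gives \eqref{eqn: main-identity-2}.
\end{enumerate}

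\textbf{What does work.} Your Steps 1--2, moved to $\ell\to 0$, are essentially the paper's small-$j$ argument (Propositions~\ref{prop: curves-small-j} and~\ref{prop:smallj}). Your edge-by-edge comparison with \cite{KM09} does go through there. For a bounded edge of weight $w$ with end triangles $T_1,T_2$, the \cite{CK} deficiency-$1$ cones give $\frac{w-1}{2}(2\mathrm{Area}(T_1)-w)+\frac{w-1}{2}(2\mathrm{Area}(T_2)-w)+(w-1)w=(w-1)\bigl(\mathrm{Area}(T_1)+\mathrm{Area}(T_2)\bigr)$, exactly what the \cite{KM09} weights give. In deficiency $2$, the terms $w(e)-1$ redistribute to the triangles as $(b(T)-3)/2$.

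\textbf{Two further remarks.} The per-vertex target is $\frac12(\mathrm{mult}_V^2-1)$, not $\mathrm{mult}_V^2$ up to a $-\frac12$ correction. More importantly, without the large-$j$ computation you would have to import \cite{KM09}'s invariance theorem, which rests on non-algebraic weights, to reach $\binom{d-1}{2}N_d$. That is precisely the dependence the paper is designed to avoid.
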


The equality in Equation~\eqref{eqn: main-identity-2} of Theorem~\ref{thm: main} was established by Kerber and Markwig~\cite{KM09} using non-algebraic multiplicities and working with $j \in \mathbb{R}_{\geq 0}$ very small. From this, they deduced a surprising closed formula expressing $N_d$ entirely in terms of certain path counts~\cite[Corollary~7.1]{KM09}. This justifies the importance of the appearance of this formula in the above statement. Note that when $j$ is very small, both curves in Table~\ref{tab: comparing weights deficiency 1} contribute to the computation and cannot be omitted, as in the cone complex of Len–Ranganathan~\cite{LR18}. 

One of the main inputs in \cite{KM09} used to prove Theorem~\ref{thm: main} was the fact that for large $j$, all enumerated tropical curves have a contracted cycle. This remains true in our setting (see Proposition \ref{prop: contracted edge}); however, because we work with a modified cone complex of tropical curves, the argument from \cite{KM09} does not apply verbatim. See also Remark \ref{rmk: pb-string}.

The proof of Theorem \ref{thm: main} is presented in \S\ref{sec: j large} and \S\ref{sec: j small} and is based on the recent tropical correspondence established in \cite[Theorems A and B]{CK}. The enumerative count of elliptic curves passing through $3d-1$ general points in $\mathbb{P}^2$ with a fixed general $j$-invariant is obtained in this paper as an invariant over the moduli space of well-spaced elliptic curves in $\mathbb{P}^2$ \cite{RSWII}. By \cite[Proposition 1.2]{CK}, this invariants are always integer. 

In fact, the count we are interested in in this paper is the genuinely enumerative one. This differs from the ordinary Gromov-Witten count as the next theorem shows.

\begin{theorem}\label{thm:GW-count}
    The Gromov-Witten count of degree $d$ planar elliptic curves in $\P^2$ through $3d-1$ points and with fixed $j$ invariant is
    $$
    E_{d,j}^{\mathrm{vir}}=d^2 N_d.
    $$
\end{theorem}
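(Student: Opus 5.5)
We compute the virtual count directly in the Gromov--Witten framework, with no tropical input. Write $\oM_{1,3d-1}(\P^2,d)$ for the moduli space of stable maps and consider
$$\Phi=(\pi,\mathrm{ev})\colon \oM_{1,3d-1}(\P^2,d)\to \oM_{1,1}\times(\P^2)^{3d-1},$$
where $\pi$ forgets the map and all but the first marking and stabilizes. The virtual dimension of the source is $3d+(3d-1)=6d-1$, which equals $\dim \oM_{1,1}+2(3d-1)$. Hence
$$E_{d,j}^{\mathrm{vir}}=\int_{[\oM_{1,3d-1}(\P^2,d)]^{\mathrm{vir}}}\pi^*[\mathrm{pt}_j]\cdot\prod_{i=1}^{3d-1}\mathrm{ev}_i^*[\mathrm{pt}],$$
and we normalize $[\mathrm{pt}_j]$ so that the count is an integer, dividing out the generic $\Z/2$ automorphism of $\oM_{1,1}$ in the usual way.

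The key step is that the class $[\mathrm{pt}_j]$ does not depend on $j$, since $\oM_{1,1}$ is a connected (coarse) $\P^1$. We may therefore replace it by the boundary point $j=\infty$, that is, the class of the nodal rational curve, $\frac12[\delta_0]$ with the standard stack normalization. The pullback $\pi^*[\delta_0]$ is the boundary divisor of maps whose domain has a nonseparating node. By the splitting (gluing) axiom for virtual classes, this divisor is the image of $\oM_{0,3d+1}(\P^2,d)$ under gluing the two extra markings $p,q$, restricted to $\mathrm{ev}_p=\mathrm{ev}_q$. We then obtain
$$E_{d,j}^{\mathrm{vir}}=\tfrac12\int_{\oM_{0,3d+1}(\P^2,d)}(\mathrm{ev}_p\times\mathrm{ev}_q)^*[\Delta]\prod_{i}\mathrm{ev}_i^*[\mathrm{pt}].$$
The factor $\frac12$ cancels against the swap of $p$ and $q$, or is absorbed by the normalization, and this must be checked carefully.

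Next we use the diagonal decomposition $[\Delta]=\sum_{a+b=2}H^a\otimes H^b$. The terms with $(a,b)=(2,0)$ and $(0,2)$ vanish. In each of these, one extra marking is constrained to a point, which gives $3d$ point conditions on a rational curve, while the other marking is unconstrained and so contributes zero by the fundamental class axiom. The $(1,1)$ term remains. By the divisor axiom each $H$ contributes a factor of $d$, so this term equals $d\cdot d\cdot N_d=d^2N_d$.

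The main obstacle is the bookkeeping of factors of $2$. These come from the $\Z/2$ automorphism of the generic elliptic curve, the stacky point $\delta_0\in\oM_{1,1}$, and the ordering of the two branches of the node. The normalization must be fixed so that the answer is consistent with the enumerative convention behind $E_{d,j}$. A sanity check is that the enumerative count $\binom{d-1}{2}N_d$ should differ from $d^2N_d$ by contributions from maps in which a contracted elliptic component is attached to a rational curve. Such a component's $j$-invariant is free, and it should contribute $\frac{d^2-(d-1)(d-2)/2}{1}\cdot N_d$. This is the excess one expects from the $\lambda_1$-type correction in Pandharipande's comparison.
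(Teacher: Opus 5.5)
Your route is the paper's: replace the $j$-constraint by the nodal curve, apply the splitting axiom, expand $\Delta_{\P^2}=1\otimes\mathsf{pt}+\mathsf{pt}\otimes 1+\mathsf{H}\otimes\mathsf{H}$, discard the first two terms by the fundamental class axiom, and evaluate $\langle\mathsf{pt}^{3d-1},\mathsf{H},\mathsf{H}\rangle^{\P^2}_{0,3d+1}=d^2N_d$ by the divisor axiom. Those steps are fine. The gap is the overall constant. It is exactly what separates $d^2N_d$ from $\tfrac12 d^2N_d$, you leave it open, and the fix you suggest goes the wrong way. The integral over $\oM_{0,3d+1}(\P^2,d)$ already counts ordered pairs $(p,q)$. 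The swap $p\leftrightarrow q$ is precisely why the gluing map $\xi\colon\overline{M}_{0,3}\to\oM_{1,1}$ is $2{:}1$ onto the nodal point, which is a $B(\Z/2)$ of degree $\tfrac12$. So the swap is the source of the $\tfrac12$ in $[\delta_0]=\tfrac12\,\xi_*[\overline{M}_{0,3}]$; it does not cancel it. With the normalization you set up (a point class with the generic $\Z/2$ divided out), your own displayed formula $E^{\mathrm{vir}}_{d,j}=\tfrac12\int_{\oM_{0,3d+1}(\P^2,d)}\cdots$ evaluates to $\tfrac12 d^2N_d$, not to the stated value.

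The missing step is to commit to the normalization the statement refers to, and this is what the paper does. It takes the $j$-constraint to be $\xi_*[\overline{M}_{0,3}]$, the class of degree one on the stack $\oM_{1,1}$, and computes $E^{\mathrm{vir}}_{d,j}=\xi^!\,\mathsf{st}_*\big(\mathrm{ev}^*(\mathsf{pt}^{3d-1})\cap[\oM_{1,3d-1}(\P^2,d)]^{\mathrm{vir}}\big)$. Then $\delta_0$ never enters, and the splitting axiom gives $\langle\mathsf{pt}^{3d-1},1,\mathsf{pt}\rangle+\langle\mathsf{pt}^{3d-1},\mathsf{pt},1\rangle+\langle\mathsf{pt}^{3d-1},\mathsf{H},\mathsf{H}\rangle$ with no prefactor. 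Your remaining steps then yield $d^2N_d$.

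Your closing sanity check is circular, and the excess it proposes, $d^2-\binom{d-1}{2}$, fits neither normalization. Done directly, it confirms the paper's normalization. With the degree-one class, each plane elliptic curve is counted twice, because the elliptic involution gives two isomorphisms $(C,p_1)\cong(E,0)$. This is also why the count drops by $\tfrac12$ and $\tfrac13$ at $j=1728$ and $j=0$. The contracted-elliptic-tail locus contributes $\deg N_f=3d-2$ per rational curve $f$, and $d^2=(d-1)(d-2)+(3d-2)$ is the double point formula for $f\colon\P^1\to\P^2$. The same bookkeeping shows your worry about matching $E_{d,j}$ was well founded. Under the convention in which the enumerative count is $\binom{d-1}{2}N_d$, the virtual count is $\tfrac{d^2}{2}N_d$. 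In either normalization, $E^{\mathrm{vir}}_{d,j}\neq E_{d,j}$.
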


The proof of Theorem \ref{thm:GW-count} is presented in \S\ref{sec: GW-count}.

\section{Acknowledgments}

We are indebted to Hannah Markwig, who explained the content of her paper \cite{KM09} to us several years ago. She also pointed out that her multiplicities were explicitly designed to yield a well-defined count, although they do not correspond to algebraic curves.

We are also grateful to Dhruv Ranganathan for discussions related to the content of this paper and its relation to his work \cite{LR18}. 

A.~C.\ is supported by SNF grant P500PT-222363. S.~K.'s doctoral programme is funded by St John's College, Cambridge.

\section{Algebraic counts}

Let $\oM_{1,3d-1}(\P^2,d)$ be the moduli stack of $3d-1$-pointed stable maps from genus $1$ curves to $\P^2$ of degree $d$. Consider the morphism
\begin{equation}\label{eqn: map}
\tau= \mathrm{ev} \times \mathsf{st}: \oM_{1,3d-1}(\P^2,d) \to (\P^2)^{3d-1} \times \oM_{1,1}
\end{equation}
given by the evaluation map and the stabilized domain curve with the $1$-st marking. The expected relative dimension of $\tau$ is $0$. However, because $\oM_{1,3d-1}(\P^2,d)$ has several components of different dimensions, the degree of $\tau$ is not well-defined. There are at least two possible approaches to obtain a curve count:
\begin{enumerate}
    \item[$\bullet$] (Gromov-Witten theory) The space $\oM_{1,3d-1}(\P^2,d)$ carries a virtual fundamental class 
    $$
    [\oM_{1,3d-1}(\P^2,d)]^{\mathrm{vir}} \in \mathsf{CH}_{\mathrm{vdim}}(\oM_{1,3d-1}(\P^2,d))
    $$   
    where $\mathrm{vdim}=3d+(3d-1)=6d-1$, and 
    $$
    E_{d,j}^{\mathrm{vir}}\coloneqq \int_{[\oM_{1,3d-1}(\P^2,d)]^{\mathrm{vir}}} \tau^*(\mathsf{pt})
    $$
    is the virtual count of elliptic curves in $\P^2$ passing through $3d-1$ points with a fixed $j$-invariant.
    \item[$\bullet$] (Enumerative count) The open locus $\cM_{1,3d-1}(\P^2,d) \subseteq \oM_{1,3d-1}(\P^2,d)$ is pure-dimensional, and thus the restriction of $\tau$ to this locus has a well-defined degree $E_{d,j}$. This is precisely the enumerative count.
\end{enumerate}

It follows from Theorems \ref{thm: main} and \ref{thm:GW-count} that $E_{d,j}^{\mathrm{vir}} \neq E_{d,j}$.

\subsection{Gromov-Witten count}\label{sec: GW-count}

In this section, we prove Theorem \ref{thm:GW-count}.

\begin{proof}[Proof of Theorem \ref{thm:GW-count}]
    Let $\xi: \overline{M}_{0,3} \to \oM_{1,1}$ be the map given by gluing the marked curve $(\P^1,0,1,\infty)$ at $1$ and $\infty$. Let $\mathsf{H}$ denote the hyperplane class in $\P^2$.  By the splitting axiom in Gromov-Witten theory, writing $\Delta_{\P^2}= 1 \otimes \mathsf{pt} + \mathsf{pt} \otimes 1 + \mathsf{H} \otimes \mathsf{H}$, we have
\begin{align*}
    E_{d,j}^{\mathrm{vir}} &= \xi^! \mathsf{st}_*( \mathrm{ev}^*(\mathsf{pt}^{3d-1} \cap [\oM_{1,n}(\P^2,d)]^{\mathrm{vir}})) \\
    &= \langle \mathsf{pt}^{3d-1}, 1, \mathsf{pt} \rangle^{\P^2}_{0,3d+1} + \langle \mathsf{pt}^{3d-1}, \mathsf{pt}, 1 \rangle^{\P^2}_{0,3d+1} + \langle \mathsf{pt}^{3d-1}, \mathsf{H}, \mathsf{H} \rangle^{\P^2}_{0,3d+1} \\
    &= d^2 N_d,
\end{align*}
where $\langle \mathsf{pt}^{3d-1},-, - \rangle^{\P^2}_{0,3d+1}$ is the genus $0$ Gromov-Witten invariant of $\P^2$ with $3d+1$ marked points and degree $d$. In the third equality, we used the fact that the first two summands vanish by the fundamental class axiom. For the third term, we applied the divisor equation twice and used the fact that genus $0$ Gromov-Witten invariants of $\P^2$ are enumerative, and thus $N_d = \langle \mathsf{pt}^{3d-1} \rangle^{\P^2}_{0,3d-1}$.
\end{proof}

\subsection{Enumerative count and well-space maps}

For a smooth projective toric variety $X$ of dimension $r$ over $\mathbb{C}$, Ranganathan--Santos-Parker--Wise~\cite{RSWI, RSWII} introduced the moduli space $\mathcal{W}_{\Gamma}(X)$ of genus $1$ well-spaced stable maps to $X$ with $n$ free markings and prescribed contact orders along the toric boundary $\partial X$.

The curve class $A \in H_2(X)$ is fixed by the contact orders, and the domain curve carries $n+m$ total markings: $m$ boundary-marked points $z_j$ with contact orders $\delta_j \in N_X$, and $n$ free points $p_i$ with contact order $0$. The tuple $\Gamma$ encodes this complete discrete contact and marking data.

When $X=\mathbb{P}^2$, the maximal tangency condition for degree $d$ curves corresponds to the tuple of vectors $\delta_d$ containing $d$ copies of the primitive generator for each ray of the fan $\Sigma(\mathbb{P}^2)$, that is,
\[
\delta_d = ( \underbrace{e_1, \dots, e_1}_d, \underbrace{e_2, \dots, e_2}_d, \underbrace{-e_1-e_2, \dots, -e_1-e_2}_d ).
\]

We write
\[
\Gamma_d=(1,3d-1,3d,\delta_d)
\]
for the discrete data consisting of genus \(1\), \(3d-1\) free markings,
\(3d\) degree markings, and degree \(\delta_d\). 

Let $\varphi: \widetilde{\mathcal{M}}_{1,3d-1}(\mathbb{P}^2,d) \to \mathcal{M}_{1,3d-1}(\mathbb{P}^2,d)^{\circ}$ be the degree-$(d!)^3$ étale cover obtained by ordering the contact points. Here, $\mathcal{M}_{1,3d-1}(\mathbb{P}^2,d)^{\circ}$ denotes the open, dense locus of maps whose domain curve is smooth and intersects each of the three toric divisors at $d$ distinct points, with no point belonging to more than one divisor. The moduli stack $\mathcal{W}_{\Gamma_d}(\P^2)$ furnishes a proper, logarithmically smooth and pure of dimensional compactification of the stack of $\widetilde{\cM}_{1,3d-1}(\P^2,d)$

Because the exact moduli functor and boundary points are not required for this paper, we omit their detailed definitions here. See \cite{RSWI,CK} for the details.

The dimension of $\cW_{\Gamma_d}(\P^2)$ is $6d-1$ and the map $\tau|_{\cM_{1,3d-1}(\P^2,d)^{\circ}}\circ \varphi$ extends to 
$$
\cW_{\Gamma_d}(\P^2) \to (\P^2)^n \times \oM_{1,1}.
$$
The degree of this last map can be computed tropically as we explain in \S\ref{sec: tropical-preamble}.

\begin{remark}
    Since we are unconcerned with the order of the contact points $z_j$, we divide by $(d!)^3$ to forget this ordering in our computations. That is, $E_{d,j}$ equals the degree of the map $\mathcal{W}_{\Gamma_d}(\mathbb{P}^2) \to (\mathbb{P}^2)^{3d-1} \times \overline{\mathcal{M}}_{1,1}$ divided by $(d!)^3$
\end{remark}

\section{Tropical count}\label{sec: tropical-preamble}

In this section, we explain the correspondence theorem in \cite{CK}.

\subsection{Tropical curves in $\R^2$}

We briefly introduce tropical curves in $\R^2$ and their moduli space. We adopt the conventions of \cite{CK} and refer to it for further details.

A tropical curve is a tuple
\[
\tropC=(G,g,p,z,\ell),
\]
where \(G\) is a finite connected graph, \(g\colon V(G)\to\mathbb{Z}_{\geq 0}\)
is a genus function,
\[
p\colon\{1,\ldots,n\}\longrightarrow L^{\mathrm{e}}(G),
\qquad
z\colon\{1,\ldots,m\}\longrightarrow L^{\mathrm{e}}(G)
\]
label the unbounded ends $L^e(G)$, and
\(\ell\colon E(G)\to\mathbb{R}_{>0}\) assigns a length to every bounded
edge.  Its genus is
\[
g(C)=b_1(G)+\sum_{V\in V(G)}g(V).
\]

Let $\delta=( \delta_j )_{j=1,\ldots,m}$ be a tuple of vectors in $\Z^2$ summing up to $0$. A parametrized tropical curve of degree $\delta$ is a continuous
piecewise integral-affine map
\[
h\colon \tropC\longrightarrow \R^2
\]
such that, for every half-edge \(\zeta\) adjacent to a vertex \(V\), the
restriction of \(h\) to \(\zeta\) has an integral slope
\(u_\zeta\in \Z^2\), and the following conditions hold:
\begin{enumerate}
    \item[$\bullet$] for every vertex \(V\),
    \[
    \sum_{\zeta\ni V}u_\zeta=0;
    \]
    \item[$\bullet$] the free markings are contracted,
    \[
    u_{p_i}=0,
    \qquad i=1,\ldots,n;
    \]
    \item[$\bullet$] the degree markings have the prescribed directions,
    \[
    u_{z_j}=\delta_j,
    \qquad j=1,\ldots,m.
    \]
\end{enumerate}
A parametrized tropical curve is \emph{stable} if every vertex of genus 0 is at least trivalent. We denote by $\Gamma = (g, n, m, \delta)$ the data consisting of the genus $g$, the number $n$ of free markings, the number of contact markings and the contact orders $\delta$. We write
\[
M^{\mathrm{trop}}_{\Gamma}(\mathbb{R}^2)
\]
for the generalized cone complex parametrizing isomorphism classes of stable parametrized tropical curves with prescribed discrete data $\Gamma$. The cones of this complex correspond to fixed combinatorial types, each given by a decorated graph $(G, g, p, z)$ together with the direction vector $u_\zeta$ for every half-edge~$\zeta$.

In this paper, we will only be interested in the situation where $\Gamma$ is $\Gamma_d=(1,3d-1,3d, \delta_d)$ or $\Gamma_d'=(0,3d-1,3d,\delta_d)$.

\subsection{Elliptic well-spaces tropical curves in $\R^2$}

\subsubsection{Main definitions}

Let $\mathsf{C}$ be a genus $1$ tropical curve and let \(\tropC_0\subseteq \tropC\) be the \emph{core} of \(\tropC\), namely the minimal
connected subgraph of genus \(1\).  When all vertex genera vanish,
\(\tropC_0\) is the unique circuit of \(\tropC\).  For vertices \(V,W\in V(\tropC)\), write
\[
V<W
\quad\Longleftrightarrow\quad
\operatorname{dist}(V,\tropC_0)
<
\operatorname{dist}(W,\tropC_0).
\]

\begin{definition}
A \emph{radial alignment} on \(C\) is a surjective map
\[
\rho\colon V(C)\longrightarrow\{0,\ldots,k\}
\]
for some \(k\geq 0\), such that
\[
\rho^{-1}(0)=V(\tropC_0)
\]
and
\[
V<W \quad\Longrightarrow\quad \rho(V)<\rho(W).
\]
In particular, vertices assigned the same value by \(\rho\) have the
same distance from \(\tropC_0\).
\end{definition}

Adding a radial alignment to the combinatorial type produces a
subdivision
\[
M^{\mathrm{trop, rad}}_{\Gamma}(\R^2)
\longrightarrow
M^{\mathrm{trop}}_{\Gamma}(\R^2).
\]
The cones of
\(M^{\mathrm{trop, rad}}_{\Gamma}(\R^2)\)
correspond to combinatorial types together with a radial alignment.

To formulate well-spacedness, let
\[
h_0\colon\mathring \tropC_0\longrightarrow \R^2
\]
be the \emph{neighborhood of the core}.  Here \(\mathring \tropC_0\) is
obtained by taking all vertices and edges of \(\tropC_0\), together with every
half-edge adjacent to a vertex of \(\tropC_0\); edges leaving \(\tropC_0\) are
replaced by unbounded legs.  The map \(h_0\) agrees with \(h\) on \(\tropC_0\)
and has the same slope as \(h\) along each of these adjacent half-edges.

For a half-edge \(\zeta\) adjacent to a vertex \(V\) of $\tropC$, set
\[
d(\zeta,\tropC_0)
=
\operatorname{dist}(V,\tropC_0).
\]
If \(H\subseteq \R^2\) is an affine line, define
\[
\mathcal{F}_{H}(h)
=
\left\{
\zeta\ \middle|\
\begin{array}{l}
\zeta\text{ is a half-edge adjacent to a vertex }V,\\
h(V)\in H,\text{ and the germ }h(\zeta)\text{ is not contained in }H
\end{array}
\right\}.
\]
Note that when $\Gamma=\Gamma_d$, we have $\mathcal{F}_{H}(h) \neq \emptyset$ for all $H$.

\begin{definition}
A parametrized genus-one tropical curve
$
[h\colon \tropC\longrightarrow \R^2]
$
is \emph{well-spaced} if, for every affine line
\(H\subseteq \R^2\), one of the following holds:
\begin{enumerate}
    \item[$\bullet$] the neighborhood of the core is not contained in \(H\), that is,
    $
    h_0(\mathring C_0)\not\subseteq H;
    $
    \item[$\bullet$] the neighborhood of the core is contained in \(H\), and, whenever
    \(\mathcal{F}_{H}(h)\neq\varnothing\), the minimum
    \[
    d_H
    :=
    \min_{\zeta\in\mathcal{F}_{H}(h)}
    d(\zeta,C_0)
    \]
    is attained by at least three half-edges.
\end{enumerate}
\end{definition}

The moduli space of well-spaced tropical curves is the subcone complex
\[
\W_{\Gamma}(\R^2)
\subseteq
M^{\mathrm{trop,rad}}_{\Gamma}(\R^2)
\]
whose points parametrize well-spaced tropical curves.  It is a
pure-dimensional generalized cone complex. When $\Gamma=\Gamma_d$, its dimension is $n+3d=6d-1$ \cite[Theorem 3.2.10]{Tor14}. 

\begin{remark}
    As for the case of algebraic curves, we are not interested in the order of the markings $z_j$, so we will discard that in our computations.
\end{remark}

\subsubsection{Deficiency, overvalence and loop multiplicity}

Next we recall the notions of deficiency, overvalence and loop multiplicity of tropical curves.

\begin{definition} 
    For a parametrized tropical curve $[h \colon \tropC \to \R^2]$:
    \begin{enumerate}
        \item The deficiency of $[h \colon \tropC \to \R^2]$ is the codimension in $\R^2$ of the subspace spanned by the image of the cycle $h(\tropC_0)$.
        \item The overvalence of $\tropC$ is 
        \[
        \ov{\tropC} = \sum_{V \in V(\tropC),\, \val{V} \geq 3} (\val{V} - 3).
        \]
    \end{enumerate}
\end{definition}

\begin{definition}\label{def:loop}
The loop multiplicity of a tropical map $[h \colon \tropC \to \R^2]$ is defined as the index of the map
\[
\mathbb{Z}^{E(\tropC_0)} \longrightarrow \operatorname{span}_{\mathbb{R}}(h(\tropC_0)) \cap \mathbb{Z}^2,
\]
given by
\[
(\ell_1, \ldots, \ell_k) \longmapsto \sum \ell_i u_i.
\]
Here, $E(\tropC_0)$ denotes the edge set of $\tropC$, and, for each edge $e_i \in E(\tropC_0)$, the symbol $u_i$ denotes the direction vector corresponding to a chosen half-edge composing $e_i$.
\end{definition}

Deficiency, overvalence and loop multiplicity are independent of the choice of a tropical curve in the interior of a cone $\sigma$ of $\mathsf{W}_{\Gamma}(\mathbb{R}^2)$. We thus obtain well-defined notions $\mathrm{def}(\sigma)$, $\mathrm{ov}(\sigma)$, and $\mathrm{loop}(\sigma)$ for the deficiency, overvalence, and loop multiplicity of a cone $\sigma$.

\subsubsection{Multiplicities of maximal cones of $\W_{\Gamma}(\R^2)$}

\begin{remark}
    Table~\ref{tab: comparing weights deficiency 1} (resp. Table~\ref{tab: comparing weights deficiency 2}) reports the core $\tropC_0$ of all well-spaced tropical curves of deficiency $1$ (resp. deficiency $2$) that will appear in our enumerations (where some marked points $p_i$ may be missing). See Propositions \ref{prop: contracted edge} and \ref{prop: curves-small-j}. 
\end{remark}

Let $\sigma$ be a maximal cone of $W_{\Gamma}(\mathbb{R}^2)$. To each such cone, Cela--Koyama \cite{CK} associate a multiplicity $m(\sigma)$. For the cones appearing in our enumeration, these multiplicities are defined as follows:
\begin{itemize}
    \item If $\mathrm{def}(\sigma) = 0$, then $m(\sigma) = \mathrm{loop}(\sigma)$.
    \item If $\mathrm{def}(\sigma) = 1$ and the core $\tropC_0$ is as shown in Table~\ref{tab: comparing weights deficiency 1}, then $m(\sigma)$ is given in the \cite{CK} row of that table.
    \item If $\mathrm{def}(\sigma) = 2$ and the core $\tropC_0$ is as shown in Table~\ref{tab: comparing weights deficiency 2}, then $m(\sigma)$ is given in the \cite{CK} row of that table.
\end{itemize}

\begin{remark}
In the notation of \cite{CK}, the multiplicity $m(\sigma)$ denotes the product $\nn(\sigma) \cdot \mathrm{loop}(\sigma)$, where $\nn(\sigma)$ represents the automorphism-weighted number of algebraic lifts of the tropical curves parametrized by $\sigma$. As shown in \cite[Theorem C]{CK}, the quantity $\nn(\sigma)$ equals the number of interior lattice points of a polygon constructed from the neighborhood of $\tropC_0$, divided by the order of the automorphism group of the maps parametrized by $\sigma$. We will not require the explicit construction of $\nn(\sigma)$ directly, as we only use $m(\sigma)$, which in dimension $2$ simplifies to the values given above.

\end{remark}

\subsubsection{The $\mathrm{ev} \times j$-determinant}

The moduli space $\W_{\Gamma}(\R^2)$ comes with a natural morphism

$$
\mathrm{ev} \times j: \W_{\Gamma}(\R^2) \to (\R^2)^n \times M_{1,1}^{\mathrm{trop}}
$$
where $M_{1,1}^{\mathrm{trop}} \simeq \R_{\geq 0}$ is the moduli space of tropical $1$-marked elliptic curves. The morphism $\mathrm{ev} \times j$ is given by the evaluation at the marking $p_i$ for $i=1,\ldots,n$ and the stabilized domain graph with the first marking.

Assume that source and target of the morphism $\mathrm{ev} \times j$ have the same dimension. Each maximal cone $\sigma \subset \W_{\Gamma}(\mathbb{R}^2)$ possesses a natural integral structure given by a lattice $N_\sigma$ such that$$\sigma \subseteq N_\sigma \otimes_{\mathbb{Z}} \mathbb{R}.$$
More precisely, by recording the lengths of all edges in $\tropC$, $\sigma$ embeds naturally into $\mathbb{R}_{\ge 0}^{E(\tropC)}$ as the solution set defined by loop-closure and well-spacedness conditions. The lattice is then defined as
$$
N_\sigma := \mathrm{Span}_{\mathbb{R}}(\sigma) \cap \mathbb{Z}^{E(\tropC)}.
$$
The restriction of the $\mathrm{ev} \times j$ map to $\sigma$ extends to a linear map 
$$
(\mathrm{ev} \times j )(\sigma): N_\sigma  \to (\Z^2)^n \times \Z
$$
\begin{definition}
    The $\mathrm{ev} \times j$-determinant of $\sigma$, denoted by $\mathrm{det}_{\mathrm{ev} \times j}(\sigma)$ is the lattice of the image of the map $(\mathrm{ev} \times j )(\sigma)$.
\end{definition}

\subsection{Correspondence theorems}

In this paper, we will use two different correspondence theorems: the genus $0$ correspondence for point insertions in $\mathbb{P}^2$ due to Mikhalkin \cite{Mik05}, and the genus $1$ correspondence for point insertions in $\mathbb{P}^2$ with fixed $j$-invariant due to Cela-Koyama \cite{CK}. Below, we recall the necessary notation and state both results.

\subsubsection{Genus $0$ correspondence}

Mikhalkin \cite{Mik05} established a correspondence theorem equating the number $N_d$ of planar rational curves of degree $d$ passing through $3d-1$ general points with the weighted count $N_d^{\mathrm{trop}}$ of rational tropical curves in $\mathbb{R}^2$ of degree $\delta_d$ passing through $3d-1$ points in general position in $\mathbb{R}^2$; that is, $N_d = N_d^{\mathrm{trop}}$. The degree $N_d$ was computed by Kontsevich \cite{Kont} using Gromov--Witten theory, while the tropical count $N_d^{\mathrm{trop}}$ was undertaken by Gathmann--Markwig \cite{GM08}.

A genus $0$ tropical curve $h': \tropC' \to \mathbb{R}^2$ contributing to Mikhalkin's count and lying in the interior of a maximal cone $\sigma'$ of $M_{\Gamma_d'}(\mathbb{R}^2)$ contributes with weight
\[
\mathrm{det}_{\mathrm{ev}}({\sigma'}).
\]
Here $\mathrm{ev} \colon M_{\Gamma_d'}(\mathbb{R}^2) \to (\mathbb{R}^2)^{3d-1}$ denotes the tropical evaluation map. As in the case of well-spaced elliptic curves, $\mathrm{ev}$ restricts on each cone $\sigma'$ of $M_{\Gamma_d'}(\mathbb{R}^2)$ to a linear map. We thus obtain a linear map of lattices, whose index is what we denoted by $\mathrm{det}_{\mathrm{ev}}({\sigma'})$. Note that $M_{\Gamma_d'}(\mathbb{R}^2)$ is a pure dimensional generalized cone complex of dimension $6d-2$.

An important feature in genus $0$ is that all tropical curves $h' \colon \tropC' \to \mathbb{R}^2$ contributing to $N_d^{\mathrm{trop}}$ for points $x_i$ in general position are simple \cite[Proposition 4.11]{Mik05}. This concept is made precise in the following definitions.

\begin{definition} \label{def: simple} \cite[Definition 4.2]{Mik05}
A genus-$0$ parametrized tropical curve $h \colon \tropC' \to \mathbb{R}^2$ is called simple if it satisfies all of the following conditions:
\begin{enumerate}[label=(\roman*)]
    \item The underlying graph of $\tropC'$ is trivalent.
    \item The map $h$ is an immersion.
    \item For any point $y \in \mathbb{R}^2$, the fiber $h^{-1}(y)$ contains at most two points.
    \item If $a, b \in \tropC'$ with $a \neq b$ satisfy $h(a) = h(b)$, then neither $a$ nor $b$ is a vertex of $\tropC'$.
\end{enumerate}
\end{definition}

\begin{definition}\label{def: general}\cite[Definition 4.7]{Mik05}
A set of points $x_1, \dots, x_{3d-1} \in \mathbb{R}^2$ is said to be in general position if every tropical curve $h \colon \tropC' \to \mathbb{R}^2$ of genus $0$ with $m$ ends passing through $x_1, \dots, x_{3d-1}$ with $3d \ge  m$ satisfies the following conditions:
\begin{enumerate}[label=(\roman*)]
    \item The curve $h' \colon \tropC' \to \mathbb{R}^2$ is simple.
    \item The inverse images $(h')^{-1}(x_1), \dots, (h')^{-1}(x_k)$ are disjoint from the vertices of $\tropC$.
    \item $3d = m$.
\end{enumerate}
\end{definition}

By \cite[Proposition~4.11]{Mik05}, the configurations of points in $(\mathbb{R}^2)^{3d-1}$ in tropically general position form a dense open subset.

Note that combining the final conditions of Definitions~\ref{def: simple} and~\ref{def: general}, we see that for every curve enumerated in $N_d^{\mathrm{trop}}$, the directions of the three half-edges meeting at any vertex span $\mathbb{R}^2$.

\subsubsection{Genus $1$ correspondence}

In \cite{CK}, the authors established a correspondence theorem for elliptic curves in any toric varieties.  For planar ellitpic curves with fixed $j$ invariant this reduces to the following.

\begin{theorem}{\cite[Corollary 1.7]{CK}}
    Let $x_1,\ldots,x_{3d-1} \in \R^2$  be points in general position and $j \in \R_{\geq 0}$ be general. Then, we have 
    $$
    E_{d,j}= \sum_{\sigma} m(\sigma) \cdot \mathrm{det_{ev \times j}}(\sigma)
    $$
    where the sum is over the maximal cones $\sigma$ in $\W_{\Gamma_d}(\R^2)$ containing a map $[h]$ such that $h(p_i)=x_i$ for all $i=1,\ldots,3d-1$ and whose stabilized domain curve in $M_{1,1}^{trop}$ has cycle length $j$.
\end{theorem}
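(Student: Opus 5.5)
The statement is a specialization of the general genus-one correspondence theorem of \cite[Theorems A and B]{CK}. I would therefore not reprove it from scratch. The plan is to reduce it to those results by a degree computation on the logarithmically smooth space $\cW_{\Gamma_d}(\P^2)$, and then read off the planar multiplicities.

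\textbf{Step 1: reduce to a tropical degree computation.} Work over a valued field, e.g.\ Puiseux series. Choose points $\tilde x_i\in(\P^2)^\circ$ with $\mathrm{val}(\tilde x_i)=x_i$, and a $\tilde j$ with $\mathrm{val}(1/\tilde j)$ corresponding to cycle length $j$. The map $\cW_{\Gamma_d}(\P^2)\to(\P^2)^{3d-1}\times\oM_{1,1}$ is a logarithmic morphism between proper, log smooth, pure-dimensional spaces of the same dimension $6d-1$. Hence its degree can be computed at this non-archimedean point.

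Every algebraic solution tropicalizes to a well-spaced tropical map $[h]$ with $h(p_i)=x_i$ and cycle length $j$, by the realizability part of \cite{RSWII}. For general $(x_i,j)$, genericity forces $[h]$ to lie in the relative interior of a maximal cone $\sigma$ of $\W_{\Gamma_d}(\R^2)$. The reason is that $\mathrm{ev}\times j$ maps lower-dimensional cones to a proper closed subset of the target.

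\textbf{Step 2: split the degree cone by cone.} Pass to a toroidal (log) modification in which $\mathrm{ev}\times j$ is a map of cone complexes. The degree of a toroidal morphism at a general interior point then splits as a sum over the maximal cones $\sigma$ hitting that point. Each summand is the product of two factors:
\begin{itemize}
\item the lattice index of $(\mathrm{ev}\times j)(\sigma)\colon N_\sigma\to(\Z^2)^{3d-1}\times\Z$, which is $\mathrm{det}_{\mathrm{ev}\times j}(\sigma)$;
\item the degree of the induced map on the closed stratum $\cW_\sigma$ onto the corresponding torus orbit of the target, i.e.\ the (automorphism-weighted) number of algebraic maps with tropical type $\sigma$ and fixed tropical data.
\end{itemize}
This second factor is exactly what \cite{CK} computes: it equals $\nn(\sigma)\cdot\mathrm{loop}(\sigma)$. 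Here $\mathrm{loop}(\sigma)$ accounts for the torsion in the gluing of the core edge-length lattice, and $\nn(\sigma)$ counts lifts via interior lattice points of the polygon attached to the neighbourhood of $\tropC_0$ \cite[Theorem C]{CK}. By the remark above, this product is $m(\sigma)$.

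\textbf{Step 3: specialize to $\P^2$ and normalize.} In dimension $2$ only deficiencies $0$, $1$ and $2$ occur.
\begin{itemize}
\item For deficiency $0$, $\nn=1$, so $m(\sigma)=\mathrm{loop}(\sigma)$.
\item For deficiency $1$ and $2$, the interior lattice point counts of the relevant polygons reduce to the entries in the \cite{CK} rows of Tables~\ref{tab: comparing weights deficiency 1} and~\ref{tab: comparing weights deficiency 2}.
\end{itemize}
Finally, both sides are taken with unordered contact points: the algebraic side by the remark dividing by $(d!)^3$, and the tropical side by the analogous remark. Dividing by $(d!)^3$ therefore identifies the degree with $E_{d,j}$.

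\textbf{Main obstacle.} The delicate step is Step 2 in the positive-deficiency cases. There the stratum $\cW_\sigma$ is not a torsor over a torus in an obvious way. The well-spacedness condition imposes a nontrivial algebraic condition (the factorization/tangency condition of \cite{RSWII}) at the core, and this is precisely what produces the $|\det(u,n_1)|-1$ and $(d-1)/2$ corrections rather than the Kerber--Markwig weights. I would defer entirely to \cite[Theorems A, B, C]{CK} for this count. My own verification would only be that the planar polygon computation gives the tabulated values, for instance that the polygon attached to the second core in Table~\ref{tab: comparing weights deficiency 2} has $(d-1)/2$ interior points after dividing by automorphisms.
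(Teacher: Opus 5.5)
Your proposal is correct and takes essentially the paper's route. The paper gives no independent argument for this statement: it imports it as \cite[Corollary 1.7]{CK}, the specialization to $\P^2$ of the correspondence theorem \cite[Theorems A and B]{CK}, with weights $m(\sigma)=\nn(\sigma)\cdot\mathrm{loop}(\sigma)$ read off in deficiencies $0$, $1$, $2$ as in Tables~\ref{tab: comparing weights deficiency 1} and~\ref{tab: comparing weights deficiency 2}. Your Steps 1--2 are a reasonable sketch of how that correspondence is obtained, and, as in the paper, the substantive input (the stratum counts $\nn(\sigma)$ in positive deficiency) is deferred to \cite{CK}.
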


\section{Tropical enumerations}

In this section, we compute the fixed $j$-invariants $E_{d,j}$, relating them to Kontsevich's count of rational curves. We do this by considering the limits where $j$ is very large and where $j$ is very small. These different approaches 
yield distinct, interesting formulas.

\subsection{The case of large $j$ invariant}\label{sec: j large}

Fix points $x_1, \ldots, x_{3d-1} \in \R^2$ in general position, and fix $j \in \R_{\geq 0}$ to be very large.

\begin{proposition}\label{prop: contracted edge}
    Let $h: \tropC \rightarrow \mathbb{R}^2$ be a well-spaced genus $1$ parametrized tropical curve of degree $\delta_d$ passing through the points $x_i$ and with cycle length $j$. Let $\sigma$ be the maximal cone in $\W_{\Gamma}(\mathbb{R}^2)$ which contains $[h]$. If the $(\mathrm{ev} \times j)$-determinant of $\sigma$ is non-zero, then the cycle contains a single contracted bounded edge. Furthermore, one of the following holds:
    \begin{itemize}
        \item $\defic{\sigma} = 0$, and the contracted bounded edge is adjacent to two trivalent vertices; or
        \item $\defic{\sigma} = 2$, and the contracted bounded edge forms a self-loop at a $4$ or $5$-valent vertex. Moreover, either:
        \begin{itemize}
            \item the loop in $\tropC$ is based at a $5$-valent vertex; or
            \item the loop is based at a $4$-valent vertex $V$, and $V$ has two adjacent edges not in the loop of equal length.
        \end{itemize}
    Both cases are illustrated in Table~\ref{tab: comparing weights deficiency 2}.
        \end{itemize}
\end{proposition}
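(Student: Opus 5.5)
Fix $x_1,\dots,x_{3d-1}$ in general position and $j$ larger than any length that can be built from the $x_i$. I would first relate the genus one curve to a rational one. If $[h]$ lies in the interior of a maximal cone $\sigma$ with $\det_{\mathrm{ev}\times j}(\sigma)\neq 0$, then $\mathrm{ev}\times j$ is a lattice isomorphism up to finite index onto $(\R^2)^{3d-1}\times\R$. So $\sigma$ has dimension $6d-1$, and $[h]$ is the unique preimage in $\sigma$ of $(x,j)$. The cycle length is $j=\sum_{e\in E(\tropC_0)}\ell(e)$.

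The key step is to show that for $j\gg0$ some edge of the cycle has unbounded length while the image $h(\tropC)$ stays bounded. Since $\mathrm{ev}\times j$ is linear on $\sigma$, I would write $[h]=A^{-1}(x,j)$. Every edge length is then affine in $j$ with coefficients determined by $\sigma$, and only finitely many cones occur.
\begin{itemize}
\item If a cycle edge $e$ with $u_e\neq0$ had length growing linearly in $j$, then $h(\tropC)$ would grow.
\item Every vertex position is tied to some marked point $x_i$ through a chain of edges, because otherwise translating the whole curve would lie in the kernel of $\mathrm{ev}$.
\item Closing the cycle forces $\sum \ell_i u_i=0$ over the cycle edges.
\end{itemize}
I would argue that the only way to absorb the direction $j\to\infty$ is an edge with $u_e=0$ on the cycle. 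To do this I would exhibit the vector in $N_\sigma$ that increases only the contracted edges of the cycle: it lies in $\ker\mathrm{ev}$ and maps nontrivially under $j$. Conversely, if no cycle edge is contracted, then $\ker \mathrm{ev}|_\sigma$ is trivial, because the graph minus the marked ends is a forest of rational pieces. Then $j$ would be a bounded function of $x$, a contradiction. Uniqueness of the contracted edge follows from a dimension count: two contracted edges, or a contracted edge together with extra valence, give a positive-dimensional fiber, hence determinant zero.

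Next I would contract the zero-slope edge to obtain a genus-zero image curve. If $\defic{\sigma}=0$, the cycle spans $\R^2$. Removing the contracted edge $e$ and gluing its endpoints gives a rational curve $\tropC'$ of degree $\delta_d$ through the $x_i$, with dimension count $6d-2$. By Mikhalkin's general position (Definitions~\ref{def: simple} and~\ref{def: general}), $\tropC'$ is simple and trivalent. Hence the endpoints of $e$ are trivalent points of $\tropC$: either two distinct vertices mapping to a crossing of $h(\tropC')$, or a trivalent vertex attached in the middle of an edge. If $\defic{\sigma}=1$, the cycle lies in a line but contains a non-contracted edge. Then $h$ would contain a doubled segment that a simple rational curve cannot produce, and well-spacedness with the dimension count rules this out (no cone of Table~\ref{tab: comparing weights deficiency 1} has a contracted edge). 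If $\defic{\sigma}=2$, the entire cycle is the contracted edge. It is then a self-loop at a vertex $V$, which mapped to $\tropC'$ becomes a point of valence $\val{V}-2$. General position forces this to be a trivalent vertex (so $V$ is $5$-valent) or an interior point of an edge (so $V$ is $4$-valent).

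The $4$-valent case needs well-spacedness with the line $H$ containing the edge through $h(V)$. Here $\mathcal F_H$ must attain its minimum at three half-edges. The two adjacent edges have distance $0$ from the core, and the minimum is attained at the next vertices, so their lengths must be equal. In the $5$-valent case the three outgoing directions span $\R^2$, so the condition is vacuous. This recovers Table~\ref{tab: comparing weights deficiency 2}. I expect the main obstacle to be the first step: rigorously showing that nonzero determinant plus $j\gg0$ forces a contracted cycle edge within the modified well-spaced complex. The argument of \cite{KM09} does not transfer directly, and the deficiency-one cones with doubled edges need separate bounding of their cycle lengths in terms of the $x_i$.
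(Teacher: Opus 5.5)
There is a genuine gap in your first step, and that step carries the content of the statement. You claim that if no cycle edge is contracted then $\ker\mathrm{ev}$ on $\sigma$ is trivial. This cannot hold. You note yourself that $\dim\sigma=6d-1$, while $(\R^2)^{3d-1}$ has dimension $6d-2$, so $\ker\mathrm{ev}$ on $\mathrm{span}(\sigma)$ is always at least one-dimensional. The condition $\det_{\mathrm{ev}\times j}(\sigma)\neq 0$ says exactly that this kernel is a line on which $j$ is nonzero.

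A concrete instance is the deficiency-$1$ cone on the right of Table~\ref{tab: comparing weights deficiency 1}. It has no contracted edge, and its determinant $2(a'+b')\det_{\mathrm{ev}}(\sigma')$ is nonzero (Lemma~\ref{lem: def1}). Its kernel direction lengthens the doubled cycle while shrinking $e_1$ and $e_2$ together. Your supporting remarks also fail. Translating the curve moves every $x_i$, so it is not in $\ker\mathrm{ev}$. Cutting $\tropC$ at the marked points never leaves only rigid rational pieces: some piece either carries the cycle or has two unbounded ends (a \emph{string} in the sense of \cite{KM09}).

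So exhibiting a kernel direction that changes $j$ proves nothing. What must be shown is that the fiber $\sigma\cap\mathrm{ev}^{-1}(x)$, a segment or a ray, is bounded in $j$ unless its direction lengthens only a contracted cycle edge. In the example above the fiber is bounded because $\ell(e_1)=\ell(e_2)$ reaches $0$. Your deficiency-$1$ exclusion presupposes the contracted edge from step one, so it inherits the gap. Your appeal to Table~\ref{tab: comparing weights deficiency 1} is circular: the table records which cones occur, not that no others do.

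The paper avoids any uniform argument by splitting on deficiency from the start.
\begin{itemize}
\item For $\defic{\sigma}=2$ the cycle is contracted by definition, so existence is free. Your treatment of uniqueness (by dimension), of the valence, and of the equal-length condition (from general position and well-spacedness) matches the paper.
\item For $\defic{\sigma}=0$ the curve is trivalent and the multiplicities agree with \cite{KM09}, so their result is simply cited.
\item For $\defic{\sigma}=1$ the paper supplies exactly the bound you flagged as missing. There is a genus-$0$ curve $h'$ with the same image through the same points, and the cycle maps into the image of a single bounded edge of $\tropC'$. Unless a cycle edge is contracted, the cycle length is then at most twice the largest bounded edge length among the finitely many rational curves through the $x_i$, hence smaller than $j$. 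A contracted cycle edge would join two non-contracted edges, producing a $4$-valent vertex or parallel edges in $h'$ and contradicting simplicity. The cycle cannot map into an unbounded end, since the vectors in $\delta_d$ are primitive.
\end{itemize}
To repair your proposal, replace the kernel-triviality claim by this case split: cite or reprove the Kerber--Markwig string argument in deficiency $0$, and use the explicit length bound in deficiency $1$.
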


\begin{remark}\label{rmk: pb-string}
    A version of the above proposition appeared in \cite[Lemma 6.1]{KM09}, but their argument does not apply verbatim here, as the existence of what the authors call a `string' does not interact well with cones satisfying a non-trivial well-spacedness condition.
\end{remark}

\begin{proof} 
    We split the proof into cases depending on the deficiency of $\sigma$.
    
    If $\defic{\sigma} = 0$, then for dimension reasons, the curve is trivalent. By \cite[Proposition 6.1]{KM09} (our multiplicities agree in this case) there is a contracted bounded edge which is part of the cycle. We obtain the first case. 

    If $\defic{\sigma} = 1$, then we can find a genus-$0$ tropical curve $h' \colon \tropC' \to \mathbb{R}^2$ with the same image as $h \colon \tropC \to \mathbb{R}^2$, thereby passing through the same point conditions. If $h$ maps the cycle in $\tropC$ to the image of a bounded edge of $\tropC'$, then the total length of the cycle is bounded above by twice the maximum edge length of $\tropC'$, unless there exists a contracted edge in the cycle. Since $\defic{\sigma} > 0$, any such contracted edge must connect two distinct edges. This would force $h'$ to feature either a $4$-valent vertex or two parallel edges, violating condition (i) or (iv) in Definition~\ref{def: simple}. Otherwise, $h$ maps the cycle of $\tropC$ to the image of an infinite end of $\tropC'$. However, this is impossible because all direction vectors in $\delta_d$ are primitive.
    
    If $\defic{\sigma} = 2$, the the cycle is contracted, and so there exists a contracted edge. By dimension considerations, there cannot be more than one contracted edge, so the cycle consists of a self-loop at a $4$ or $5$-valent vertex. The condition on edge lengths arises from well-spacedness.
\end{proof}

By removing the contracted edge (and removing any 2-valent vertices that appear), we obtain a genus $0$ parametrized tropical curve which we denote $h': \tropC' \rightarrow \mathbb{R}^2$, also passing through $x_1, \ldots, x_{3d-1}$.

The first part of Theorem \ref{thm: main} will follow from the following proposition, which the rest of the section will be spent proving. 

\begin{proposition} \label{prop:largej}
    Fix a genus $0$ parametrized tropical curve $h': \tropC' \rightarrow \mathbb{R}^2$ passing through $x_1, \ldots, x_{3d-1}$, and let $\sigma'$ be the maximal cone in $M^{\mathrm{trop}}_{\Gamma'_d}(\mathbb{R}^2)$ which contains $[h']$. Let $\mathcal{S}$ be the collection of maximal cones in $W_{\Gamma_d}(\mathbb{R}^2)$ corresponding to genus $1$ parametrized tropical curves passing through $x_1, \ldots, x_{3d-1}$ with cycle length $j$ such that $h': \tropC' \rightarrow \mathbb{R}^2$ is obtained removing the contracted edge. Then 
    $$\sum_{\sigma \in \mathcal{S}} m(\sigma) \cdot \mathrm{det_{ev \times j}}(\sigma) = {d-1 \choose 2} \mathrm{det_{ev}}(\sigma'),$$ 
\end{proposition}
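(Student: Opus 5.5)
Fix $h'\colon\tropC'\to\R^2$ with trivalent, simple image, as guaranteed by Definitions~\ref{def: simple} and~\ref{def: general}. By Proposition~\ref{prop: contracted edge}, each $\sigma\in\mathcal S$ arises from $\tropC'$ in one of two ways. Either a contracted edge of length $L$ is attached at two points $P,Q$ of $\tropC'$, with $h(P)=h(Q)$, so that the cycle is the contracted edge together with the path from $P$ to $Q$ in $\tropC'$; this is the deficiency $0$ case. Or a contracted self-loop is attached at a single point; this is the deficiency $2$ case. The loop can sit at a vertex $V$ of $\tropC'$, giving a $5$-valent vertex, or at an interior point of an edge, giving a $4$-valent vertex. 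Since the $x_i$ are in general position and $j$ is very large, in the deficiency $0$ case the cycle length $j$ is essentially $L$, so $L$ is determined and the attachment can be made for every admissible pair $(P,Q)$. I would therefore organise the sum as a sum over the local configurations of $\tropC'$ and compute $m(\sigma)\cdot\det_{\mathrm{ev}\times j}(\sigma)$ for each one, expressing it as $\det_{\mathrm{ev}}(\sigma')$ times a local factor.

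\textbf{Determinants.} Forgetting the contracted edge gives a linear map relating the lattices $N_\sigma$ and $N_{\sigma'}$. The $j$-coordinate absorbs the one new parameter, namely the length of the contracted edge or loop. I expect the determinant to factor as $\det_{\mathrm{ev}\times j}(\sigma)=\det_{\mathrm{ev}}(\sigma')\cdot c(\sigma)$, where $c(\sigma)$ is an index coming from how the closing condition $\sum \ell_i u_i=0$ cuts out $N_\sigma$. In the deficiency $0$ case this index should cancel against $\mathrm{loop}(\sigma)$ in the appropriate way. For the loop at an edge of weight $w$ (the $4$-valent case), well-spacedness forces the two adjacent edges to have equal length. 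This halves the freedom of the point where the loop is attached, so a factor of $2$ or $1/2$ should appear; I would check this carefully.

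\textbf{Local contributions.} The main work is summing the local contributions and matching them with the $m(\sigma)$ from Tables~\ref{tab: comparing weights deficiency 1} and~\ref{tab: comparing weights deficiency 2}.
\begin{itemize}
\item For a vertex $V$ of $\tropC'$ dual to a triangle $T$, the $5$-valent loop contributes $i(T)$ times $\det_{\mathrm{ev}}(\sigma')$.
\item For each bounded edge of weight $w$, the $4$-valent loop at its midpoint contributes $(w-1)/2$ times $\det_{\mathrm{ev}}(\sigma')$.
\item For each crossing of two edges of $h'$ (a node of the image, dual to a parallelogram of area $w_1w_2|\det(u_1,u_2)|$), the pairs $(P,Q)$ give deficiency $0$ cones whose total contribution should be $w_1w_2|\det(u_1,u_2)|$ times $\det_{\mathrm{ev}}(\sigma')$. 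This counts the $\delta$-invariant of the node, since the loop index recovers the lattice area.
\end{itemize}

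\textbf{Summing.} The total local factor is then the number of interior lattice points of all the triangles, plus $\sum (w-1)/2$ over edges, plus the parallelogram areas. Using Pick's theorem on the dual Newton subdivision of the degree $d$ triangle, I would show that this total equals $\binom{d-1}{2}$, the number of interior lattice points of the degree $d$ triangle. Interior points of the big triangle are distributed among interior points of the cells, interior points of interior edges, and vertices of the subdivision. The parallelograms account for the areas not covered by triangles, and their interior and edge points must be counted in the same way; the genus formula for a rational curve gives exactly this bookkeeping.

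\textbf{Main obstacle.} I expect the hardest step to be the exact determinant and index computation in the deficiency $0$ crossing case and in the equal-length $4$-valent case, so that these contributions come out to exactly the parallelogram area and exactly $(w-1)/2$. I would first test the computation on a single crossing of two primitive edges, and on a weight-$2$ edge, before treating the general case.
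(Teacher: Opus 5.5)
\textbf{Gap: the weight-$w$ edge term is off by a factor of $2$, and the final identity fails as you state it.}

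Your overall plan is the same as the paper's. You split $\mathcal S$ into deficiency $0$ and deficiency $2$ cones, extract local factors $i(T)$, an edge term and $\mathrm{Area}(P)$, and finish by counting interior lattice points of $\Delta_d$.

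The problem is the third bullet: you have the $4$-valent self-loop on an edge of weight $w$ contributing $(w-1)/2$ times $\mathrm{det}_{\mathrm{ev}}(\sigma')$. Here $m(\sigma)=(w-1)/2$ is right, but $\mathrm{det}_{\mathrm{ev}\times j}(\sigma)=2\,\mathrm{det}_{\mathrm{ev}}(\sigma')$, not $\mathrm{det}_{\mathrm{ev}}(\sigma')$:
\begin{itemize}
\item In $N_\sigma$, well-spacedness ties the lengths of the two halves $e_1,e_2$ of $e$ to a single integer coordinate $t$.
\item The loop length is an extra coordinate whose column has a single entry $1$, in the $j$-row.
\item Deleting that row and column gives the matrix of $\mathrm{ev}(\sigma')$, except that the $\ell(e)$-column is replaced by the $t$-column. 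Since $\ell(e)=2t$, the $t$-column is twice the $\ell(e)$-column.
\end{itemize}
So the equal-length condition does not halve the contribution. It makes the admissible values of $\ell(e)$ the index-$2$ sublattice $2\Z$, which doubles the determinant. Each bounded edge therefore contributes $(w(e)-1)\,\mathrm{det}_{\mathrm{ev}}(\sigma')$.

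This is not cosmetic, because the identity you then sum to is false. The correct identity is
\[
\sum_T i(T)+\sum_{e\in E(\tropC')} (w(e)-1)+\sum_P \mathrm{Area}(P)=\binom{d-1}{2}.
\]
It holds for three reasons:
\begin{itemize}
\item $\sum_e (w(e)-1)=\sum_T (b(T)-3)/2$.
\item Pick's theorem gives $\mathrm{Area}(P)=i(P)+(b(P)-4)/2+1$.
\item Hence each interior lattice point of $\Delta_d$ in the relative interior of a subdivision edge is counted exactly once. Each such edge is shared by two cells, and boundary edges have lattice length $1$ because the ends are primitive. The remaining $+1$'s count the interior vertices, which equal the number of parallelograms since the genus is $0$.
\end{itemize}

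With your $\sum_e (w(e)-1)/2$, only half of the edge points on triangle sides are counted. For example, take $d=3$ and a rational cubic whose subdivision has a weight-$2$ edge from $(1,0)$ to $(1,2)$ through $(1,1)$. There are no parallelograms and no interior points of triangles, so your total is $1/2$, whereas $\binom{2}{2}=1$. Once you correct the determinant in the $4$-valent case, your argument coincides with the paper's.
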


To prove Proposition \ref{prop:largej}, we will distinguish the two cases in Proposition~\ref{prop: contracted edge}. 

Let $\sigma'$ and $\mathcal{S}$ be as in Proposition~\ref{prop:largej}. Write $\mathcal{S} = \mathcal{S}_0 \sqcup \mathcal{S}_2$, where $\mathcal{S}_i$ is the collection of maximal cones $\sigma \subset W_{\Gamma_d}(\mathbb{R}^2)$ corresponding to genus-one parametrized tropical curves passing through $x_1, \ldots, x_{3d-1}$ with cycle length $j$ and deficiency $i$, such that cutting the contracted edge in the curves parametrized by $\sigma$ yields curves in $\sigma'$.

\subsubsection{Contribution from curves of deficiency $2$ to the large $j$ enumeration}

First, suppose that $\sigma \in \mathcal{S}_2$.
To state our next result we require the following standard notations.

\begin{definition}
        The \emph{weight} $w(e)$ of $e$ is the largest integer such that the direction vector $u_e$ of $e$ can be written as $u_e = w(e) u'$ for $u' \in \mathbb{Z}^2$. 
    \end{definition}

    \begin{definition}
        Given a lattice polytope $P \subset \mathbb{Z}^2$, we write $i(P)$ for the number of interior lattice points, and $b(P)$ for the number of points on the boundary. 
        
    \end{definition}

    Finally, \cite{Mik05} associated to every simple genus-$0$ tropical curve a subdivision of the triangle $\Delta_d$ with vertices $(0,0)$, $(0,d)$, and $(d,0)$.

    \begin{lemma} \label{lem:def2}
    We have
    $$\sum_{\sigma \in \mathcal{S}_2} m(\sigma) \cdot \mathrm{det_{ev \times j}}(\sigma) = \mathrm{det_{ev}}(\sigma') \bigg[\sum_{T} i(T) + \sum_{e \in E(\tropC')} (w(e) - 1)  \bigg],$$
    where the first sum goes over the triangles of the Newton subdivision associated to $h': \tropC' \rightarrow \mathbb{R}^2$. 
    \end{lemma}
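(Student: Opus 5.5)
We enumerate the deficiency-$2$ genus-one curves lying over $h'$ by locating the contracted self-loop, then compute each cone's multiplicity and determinant.

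\textbf{Classification.} By Proposition~\ref{prop: contracted edge}, every $\sigma\in\mathcal{S}_2$ arises from $h'$ by attaching a contracted self-loop, and there are two ways to do this.
\begin{enumerate}
\item[(a)] The loop sits at a vertex $V$ of $\tropC'$, making $V$ a $5$-valent vertex whose cycle is contracted. This is the first column of Table~\ref{tab: comparing weights deficiency 2}.
\item[(b)] The loop sits at an interior point of a bounded edge $e$ of $\tropC'$, creating a $4$-valent vertex. Well-spacedness forces the two halves of $e$ to have equal length, so the point is the midpoint of $e$. This is the second column of Table~\ref{tab: comparing weights deficiency 2}.
\end{enumerate}
In case (b) we also need the case of an unbounded end. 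There the midpoint condition is vacuous, but well-spacedness fails, since the minimum in the definition is attained by only two half-edges. The weight of an end is $1$ anyway, so ends contribute nothing to $\sum_e (w(e)-1)$. Loops placed on contracted marked legs, or at the vertex carrying a marking, also need to be ruled out. We would handle this by general position and stability: the marked points lie in edge interiors, so such placements produce non-maximal cones or cones with zero determinant.

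\textbf{Determinant.} In each case the cycle length $j$ is an independent coordinate of $\sigma$, namely the length of the loop. The map $\mathrm{ev}\times j$ therefore has block-triangular form. One block is the identity on the loop-length coordinate. The other is $\mathrm{ev}$ on the remaining edge lengths, which is exactly $\mathrm{ev}$ on $\sigma'$.

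In case (b) the edge $e$ is split into two halves constrained to equal length $\ell(e)/2$. The lattice $N_\sigma$ is generated by integer vectors in the span, so the half-length parameter $s$ with $\ell(e)=2s$ is integral. We must check whether this rescaling changes the index. Since $\mathrm{ev}$ depends on $\ell(e)$ only through $\ell(e)$, the substitution multiplies the determinant by $2$.

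So we expect $\det_{\mathrm{ev}\times j}(\sigma)=\det_{\mathrm{ev}}(\sigma')$ in case (a) and $2\det_{\mathrm{ev}}(\sigma')$ in case (b). Getting this factor of $2$ right, together with the $1/2$ in the multiplicity $(d-1)/2$, is the main obstacle. We would verify it carefully from the definition of $N_\sigma$. Here $d$ in the table means the weight $w(e)$, and we must also check that automorphisms contribute nothing extra.

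\textbf{Summation.}
\begin{enumerate}
\item[(a)] The multiplicity is $i(\text{dual polygon at } V)$. For a trivalent vertex of $\tropC'$ the dual polygon is the triangle $T_V$, so summing over $V$ gives $\sum_T i(T)$ times $\det_{\mathrm{ev}}(\sigma')$.
\item[(b)] The multiplicity is $(w(e)-1)/2$, and the determinant is doubled. The product is $(w(e)-1)\det_{\mathrm{ev}}(\sigma')$, and summing over bounded edges gives the second term.
\end{enumerate}
Adding the two cases gives the stated identity.
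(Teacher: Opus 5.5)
Your proposal is correct and follows essentially the same route as the paper. You use the same split into the $5$-valent-vertex case and the midpoint-of-a-bounded-edge case, read the multiplicities off Table~\ref{tab: comparing weights deficiency 2}, and get $\mathrm{det_{ev \times j}}(\sigma)=\mathrm{det_{ev}}(\sigma')$ and $2\,\mathrm{det_{ev}}(\sigma')$ respectively. The paper obtains these determinants the same way, by deleting the loop-length row and column and observing that in $N_\sigma$ one has $\ell(e)=2\ell(e_1)$. Your extra remarks ruling out loops on unbounded ends and at marking vertices are consistent with the paper, which leaves these points implicit.
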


    \begin{proof}
    Let $\sigma$ be a cone in $\mathcal{S}_2$. If the curves $h$ in $\sigma$ are as in the first dash of the second bullet of Proposition \ref{prop: contracted edge}, then $m(\sigma) = i(T)$ where $T$ is the dual triangle to the vertex $V$. Moreover, $\mathrm{det_{ev \times j}}(\sigma) = \mathrm{det_{ev}}(\sigma')$ (see the proof of \cite[Lemma 4.10]{KM09}).

    Suppose instead that $\sigma$ parametrized curves as in the second dash of the second bullet of Proposition \ref{prop: contracted edge}. Let $e$ be the edge in $h': \tropC' \rightarrow \mathbb{R}^2$ which is formed after removing the contracted edge and resulting bivalent vertex. Then $w(e)$ is also the weights of the direction vectors of the non-contracted edges adjacent to $V$, and so $m(\sigma) = (w(e) - 1)/2$. 

    We have
    $$\mathrm{det_{ev \times j}}(\sigma) = 2 \times \mathrm{det_{ev}}(\sigma').$$
    Indeed, let $e_1, e_2$ be the non-contracted edges adjcant to $V$. A lattice basis of $N_\sigma$ in this case is given by the edge lengths of the bounded edges which are not $e_1$ or $e_2$, and the length of $e_1$ (which by well-spacedness must be equal to the length of $e_2$). Use this basis to obtain a matrix representation of the linear map
    $$
    (\mathrm{ev} \times j )(\sigma): N_\sigma  \to (\Z^2)^n \times \Z. 
    $$
    Then the column corresponding to the contracted edge has one entry, which is the row corresponding to the cycle length. Deleting this row and column gives a matrix representation of $\mathrm{ev}(\sigma'): N_{\sigma'} \to (\Z^2)^n$, except the column corresponding to the length of $e_1$ is replaced by a column corresponding to the length of $e$. Then length of $e$ is twice the length of $e_1$, and so the claim follows.  

    \end{proof}

\subsubsection{ Contribution from curves of deficiency $0$ to the large $j$ enumeration}

Next we deal with cones in $\mathcal{S}_0$.

    \begin{lemma}
    We have
    $$\sum_{\sigma \in \mathcal{S}_0} m(\sigma) \cdot \mathrm{det_{ev \times j}}(\sigma) = \mathrm{det_{ev}}(\sigma') \cdot \sum_{P} \mathrm{Area}(P),$$
    where sum goes over the parallelograms of the Newton subdivision associated to $h': \tropC' \rightarrow \mathbb{R}^2$.
    \end{lemma}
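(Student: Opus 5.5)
Here $\sigma$ ranges over the deficiency-$0$ cones in $\mathcal{S}_0$. By Proposition~\ref{prop: contracted edge}, a curve in such a cone has a trivalent cycle containing a single contracted bounded edge $e_0$, whose endpoints $V_1,V_2$ are trivalent. Removing $e_0$ gives $h'$ in $\sigma'$. I would first describe $\mathcal{S}_0$ combinatorially and then compute each term.

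\textbf{Describing $\mathcal{S}_0$.} Since $e_0$ is contracted, $h(V_1)=h(V_2)$. After deleting $e_0$ and smoothing the bivalent vertices, $V_1$ and $V_2$ become interior points of two edges $e,f$ of $\tropC'$ with the same image point. The cycle has deficiency $0$, so $e\neq f$ and $e,f$ are not parallel. Because $\tropC'$ is simple, $h'(e)\cap h'(f)$ is a transverse crossing of two edges. Such crossings correspond exactly to the parallelograms $P$ of the Newton subdivision dual to $h'$. Conversely, each crossing together with a choice of which vertices of $\tropC'$ lie in the cycle should give a genus-$1$ curve. The cycle consists of $e_0$ and the path in the tree $\tropC'$ joining the two preimages of the crossing point. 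Its length must be $j$; for $j$ large it is the lengths along this path plus $\ell(e_0)$, and one can always take $\ell(e_0)$ large. So each parallelogram $P$ contributes exactly one cone $\sigma_P$, and every element of $\mathcal{S}_0$ arises this way. Well-spacedness is vacuous here since the deficiency is $0$. I would also check the bijection against automorphisms: there are none, since the curve is trivalent with primitive ends.

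\textbf{Computing $m(\sigma_P)\cdot\det_{\mathrm{ev}\times j}(\sigma_P)$.} In deficiency $0$, $m(\sigma_P)=\mathrm{loop}(\sigma_P)$. Set up $N_{\sigma_P}$ as in Lemma~\ref{lem:def2}, with coordinates the edge lengths of $\tropC$. The bounded edges of $\tropC'$ are subdivided: $e$ splits into $e',e''$ and $f$ into $f',f''$. There is one extra coordinate $\ell(e_0)$, and the cycle must close up, which is a constraint valued in $\Z^2$. The column of $\ell(e_0)$ has a single nonzero entry $1$, in the $j$-row, so as before one may delete that row and column. What remains is $\mathrm{ev}$ on the lattice of the subdivided tree, restricted by the closing condition that the images of the two new vertices coincide. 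I would then compare with $\det_{\mathrm{ev}}(\sigma')$. Position the two new vertices by parameters $s,t$ along $e$ and $f$; the closing condition is $s\,u_e-t\,u_f=(\text{fixed vector})$. The relevant index change is governed by $|\det(u_e,u_f)|=\mathrm{Area}(P)$, and the loop multiplicity enters as the index of $\Z^{E(\tropC_0)}\to\Z^2$.

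The main obstacle is this lattice bookkeeping: showing $\mathrm{loop}(\sigma_P)\cdot\det_{\mathrm{ev}\times j}(\sigma_P)=|\det(u_e,u_f)|\cdot\det_{\mathrm{ev}}(\sigma')$ exactly, rather than with the loop index split between the two factors in some other way. I would try first to follow the proof of \cite[Lemma~4.10]{KM09}, where the deficiency-$0$ multiplicities agree with ours, and cite it if it applies. Otherwise I would compute directly: the lattice $N_{\sigma_P}$ is cut out by the closing condition, and its index in the unconstrained lattice is $|\det(u_e,u_f)|/\mathrm{loop}$. Multiplying by $\mathrm{loop}$ then gives exactly $\mathrm{Area}(P)\cdot\det_{\mathrm{ev}}(\sigma')$. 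Summing over $P$ gives the lemma.
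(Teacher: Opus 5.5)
Your proposal is correct and follows the paper's own route: match the cones of $\mathcal{S}_0$ with crossings of two edges of $h'$, i.e.\ with parallelograms of the Newton subdivision. Then use the Kerber--Markwig identity $\mathrm{loop}(\sigma)\cdot\det_{\mathrm{ev}\times j}(\sigma)=|\det(u_e,u_f)|\cdot\mathrm{det_{ev}}(\sigma')$, which the paper cites as \cite[Lemma~4.11]{KM09}; Lemma~4.10 is the one the paper uses for the deficiency-$2$ loop case. Your direct fallback computation is also sound, provided your ``index in the unconstrained lattice'' is read as the index in $N_{\sigma'}$ of the (injective) projection of $N_{\sigma_P}/\Z\,\ell(e_0)$, and that index is indeed $|\det(u_e,u_f)|/\mathrm{loop}(\sigma_P)$.
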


    \begin{proof}
    For $\sigma \in \mathcal{S}_0$, we immediately have $m(\sigma) = \mathrm{loop}(\sigma)$. Let $V_1, V_2$ be the vertices in $\tropC$ adjacent to the contracted edge. Choose $u_i$ such that $\pm u_i$ are the direction vectors of the non-contracted edges adjacent to $V_i$, for $i = 1,2$.  Then it is shown in \cite[Lemma 4.11]{KM09} that 
    $m(\sigma) \cdot \mathrm{det_{ev \times j}}(\sigma) = |\det(u_1, u_2)| \cdot \mathrm{det_{ev}}(\sigma')$. The conclusion follows observing that if $P$ is the parallelogram in the Newton subdivision associated to the two crossing edges in $h': \tropC' \rightarrow \mathbb{R}^2$, then $|\det(u_1,u_2)|$ is the area of $P$. 

    \end{proof}

\subsubsection{Proof of Proposition \ref{prop:largej} and Theorem \ref{thm: main} for large $j$}

We now prove Proposition \ref{prop:largej}

\begin{proof}[Proof of of Proposition \ref{prop:largej}]
    We need to show that 
    $$\sum_{T} i(T) + \sum_{e \in E(\tropC')} (w(e) - 1) + \sum_{P} \mathrm{Area}(P) = {d-1 \choose 2},$$
    where $T$ (resp. $P$) runs over the triangles (resp. parallelograms) of the Newton subdivision associated to $h': \tropC' \rightarrow \mathbb{R}^2$. 

    Every bounded edge $e \in E(\tropC')$ is adjacent to two vertices in $V(\tropC')$, and the number of interior lattice points on the edge of the Newton subdivision corresponding to $e$ is $w(e)-1$. So 
    $$\sum_{e \in E(\tropC')} (w(e) - 1)  = \sum_T \frac{b(T) - 3}{2}.$$

    By Pick's theorem $\mathrm{Area}(P)  = i(P) + b(P)/2- 1.$ 
    Thus 
    \begin{align*}
        &\sum_{T} i(T) + \sum_{e \in E(\tropC')} (w(e) - 1) + \sum_{P} \mathrm{Area}(P) \\
        &= \sum_T i(T) + \sum_P i(P) + \sum_T \frac{b(T) - 3}{2} + \sum_P \bigg(\frac{b(P)}{2}-1\bigg) \\
        &= \sum_T i(T) + \sum_P i(P) + \sum_T \frac{b(T) - 3}{2} + \sum_P \frac{b(P) - 4}{2}+\#P,
    \end{align*}
    where $\#P$ denotes the number of parallelograms in the Newton subdivision.
    
    Now $\sum_T i(T) + \sum_P i(P)$ is the number of interior lattice points of $\triangle_d$ which are contained in the interior of a polygon of the subdivision. The sum $\sum_T \frac{b(T) - 3}{2} + \sum_P \frac{b(P) - 4}{2}$ is equal to the number of interior lattice points of $\triangle_d$ which are contained in the boundary of a polygon of the subdivision but are \emph{not} a vertex of any polygon. Finally, the number of parallelograms $\#P$ is equal to the number of interior lattice points that are vertices of the subdivision. Indeed, the difference between the number of such interior lattice points and the number of parallelograms equals the genus of the source curve, which here is $0$. This is because the image of the curve contains a loop for every such vertex, while each parallelogram corresponds to a node in the image not arising from the source curve.
    
    Concluding we have 
    \begin{align*}
        \sum_{T} i(T) + \sum_{e \in E(\tropC')} (w(e) - 1) + \sum_{P} \mathrm{Area}(P) = i(\Delta_d) = {d-1 \choose 2}. 
    \end{align*}
    
\end{proof}

\subsection{The case of small $j$ invariant} \label{sec: j small}
We now prove the second part of Theorem \ref{thm: main}. Fix points $x_1, \dots, x_{3d-1} \in \mathbb{R}^2$ in general position, and choose $j > 0$ to be sufficiently small. If necessary, we may restrict the points $x_i$ to vary within a smaller open set than the one specified by general position in Definition \ref{def: general}.

\begin{proposition}\label{prop: curves-small-j}
    Let $h: \tropC \rightarrow \mathbb{R}^2$ be a well-spaced genus $1$ well-spaced parametrized tropical curve of degree $\delta_d$ passing through the points $x_i$ and with cycle length $j$. Let $\sigma$ be the maxima lcone in $W_{\Gamma}(\mathbb{R}^2)$ which contains $[h]$. If the $(\mathrm{ev} \times j)$-determinant of $\sigma$ is non-zero, then oone of the following holds:
    \begin{itemize}
        \item $\defic{\sigma} = 0$, and the cycle is formed of three vertices and three edges (with no marked points). 
        \item $\defic{\sigma} = 1$, and there is a cycle formed by two vertices $V_1, V_2$ joined by two edges (no marked points are on the cycle). Moreover, we have one of two cases:
            \begin{itemize}
                \item One between $V_1, V_2$ is 4-valent and the other is 3-valent. The half-edge not in the cycle and adjacent to the 3-valent vertex in the cycle is not an infinite end.  
                \item Both $V_1, V_2$ are 3-valent. Let $e_1, e_2$ be the two half-edges adjacent to $V_1, V_2$ respectively which are not in the cycle. Then $e_1$ and $e_2$ are not infinite ends and they have the same length. 
            \end{itemize}
            Both cases are illustrated in Table \ref{tab: comparing weights deficiency 1}.
        \item $\defic{\sigma} = 2$, and there is a single contracted edge which forms a self-loop, at a $4$ or $5$-valent vertex. Such vertex carries no marking.
            Both cases are illustrated in Table \ref{tab: comparing weights deficiency 2}.
    \end{itemize}

\end{proposition}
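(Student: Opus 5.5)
The plan is to split by deficiency, as in the proof of Proposition~\ref{prop: contracted edge}, and to use the smallness of $j$ together with genericity of the $x_i$ to force the cycle to be combinatorially as small as possible. Since $\det_{\mathrm{ev}\times j}(\sigma)\neq 0$ and $\dim \W_{\Gamma_d}(\R^2)=6d-1$ equals the target dimension, the map $\mathrm{ev}\times j$ restricted to $\sigma$ is injective. As $j\to 0$, the curves through the fixed $x_i$ converge to a curve in a codimension-one face. On that face the cycle has total length $0$, i.e.\ it is contracted to a point, and the resulting curve is a genus-$1$ curve with a genus-$1$ vertex (or a loop of length $0$). Equivalently, it is a rational tropical curve through the $x_i$ with a distinguished vertex carrying the genus. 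By Mikhalkin's general position (Definition~\ref{def: general}), that rational curve is simple: trivalent, with vertices avoiding the $x_i$ and spanning directions. So the whole neighbourhood of the collapsing cycle must resolve a single trivalent vertex $V$ of a simple curve $\tropC'$, or a point in the interior of an edge of $\tropC'$. In particular no marking $p_i$ lies on the cycle, since each $x_i$ lies in the interior of an edge away from vertices and the cycle shrinks to a point.

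The remaining work is a local classification near $V$ (or near an edge point) by dimension count. In deficiency $0$, the cone is trivalent for dimension reasons. A cycle of length $k$ around a trivalent point has $k$ outgoing edges merging to the three directions of $V$. Trivalence and the requirement that the parameter count match (the $k$ cycle lengths subject to two closure equations, plus the outgoing edge lengths) force $k=3$ with each cycle vertex sending one edge to one leg of $V$; larger $k$ would give extra bounded edges and positive-dimensional fibres. In deficiency $2$, the cycle is a contracted self-loop as in Proposition~\ref{prop: contracted edge}, based at a $4$- or $5$-valent vertex. It carries no marking because the base vertex maps to the collapse point, which is a vertex or edge-interior point of $\tropC'$ distinct from every $x_i$.

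The main obstacle is deficiency $1$. Here the cycle maps into a line, and I would argue it consists of two vertices joined by two parallel edges: more cycle vertices would add free parameters not detected by $\mathrm{ev}\times j$, because sliding along the line changes neither the image nor $j$. Overvalence is then at most $1$ by the dimension formula, giving the $4$-valent/$3$-valent case or the trivalent case. In the trivalent case, the well-spacedness condition with $H$ the line of the cycle forces the two outgoing half-edges $e_1,e_2$ to reach the next vertices at equal distance. Such a half-edge cannot be an infinite end, since then $\mathcal F_H$ would have too few minimisers, or equivalently the determinant vanishes. The same argument applies to the half-edge at the trivalent vertex in the $4$-valent case, so there too the outgoing half-edge is not an infinite end. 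I expect verifying exactly which well-spacedness configurations survive with nonzero determinant to require explicit matrix computations. For those I would follow the lattice-basis argument used in the proof of Lemma~\ref{lem:def2}, and I may need to shrink the open set of $x_i$ to exclude degenerate coincidences.
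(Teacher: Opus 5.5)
Your overall route is the paper's: let $j\to 0$, use that the limit is a simple rational curve $\tropC'$ through general points, so the collapsing cycle must resolve a trivalent vertex or an edge point of $\tropC'$, then split by deficiency and use well-spacedness for the equal-length condition. However, one step fails outright.

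That step is the infinite-end claim in the $4$-valent/$3$-valent case, where you say \emph{the same argument applies}. It does not. There the core neighbourhood $\mathring{\tropC}_0$ contains the two non-cycle half-edges at the $4$-valent vertex, and these leave the line of the cycle. So $h_0(\mathring{\tropC}_0)$ lies in no line, well-spacedness imposes nothing, and there are no minimisers of $\mathcal{F}_H$ to count. Nor does the determinant vanish. If that half-edge were an end, the cycle parameter would be detected by $j$ and the other lengths by $\mathrm{ev}$ exactly as for $\tropC'$, giving $(a'+b')\,\mathrm{det}_{\mathrm{ev}}(\sigma')\neq 0$. The missing observation is primitivity. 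Balancing at a trivalent cycle vertex whose cycle edges have weights $a,b$ in direction $u$ forces the outgoing edge to have direction $(a+b)u$, of weight $a+b\geq 2$, while every end in $\delta_d$ is primitive. This settles both subcases at once, and is simpler than your well-spacedness argument even in the trivalent subcase.

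Two further justifications need repair.

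First, your exclusion of markings on the cycle only works when the cycle collapses to a vertex of $\tropC'$. In the trivalent deficiency-$1$ case the cycle collapses to an interior point of an edge, and Mikhalkin's general position does allow an $x_i$ there. Likewise, in deficiency $2$ a marked $5$-valent base vertex with two opposite edges becomes an edge point of $\tropC'$, so \emph{distinct from every $x_i$} is unsupported. What rules these out is that well-spacedness forces equal lengths on the two sides. Then $x_i$ would be the midpoint of an edge of $\tropC'$, a codimension-one condition; in deficiency $2$ the marked cone has dimension only $6d-2$. The paper appeals to genericity of the $x_i$ here, and to balancing plus well-spacedness in deficiency $2$.

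Second, your sliding argument for two cycle vertices in deficiency $1$ is false as stated. Moving a cycle vertex along the line changes the two adjacent cycle lengths by $\pm t/a$ and $\pm t/b$, so it generally changes $j$. It also changes the length of that vertex's outgoing edge, which can break well-spacedness. The two-vertex conclusion should come from the limit argument in your first paragraph. Count the outgoing half-edges of the subgraph that collapses at $j=0$: a genus-one graph with $s$ vertices has $s$ of them, or $s+1$ if one vertex is $4$-valent. Then require the limit point to be either a trivalent vertex with spanning directions or an edge point. With all cycle vertices trivalent, balancing makes their outgoing edges parallel to the cycle's line, which only an edge point can accommodate. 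The same count gives $k=3$ in deficiency $0$; your remark about positive-dimensional fibres is not the reason there, since all trivalent genus-one cones have the same dimension.
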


\begin{proof}
    As $j$ goes to $0$ (keeping $x_1, \dots, x_{3d-1}$ fixed), there is a fixed collection of maximal cones of $\W_{\Gamma_d}(\mathbb{R}^d)$ contributing to $E_{d,j}^{\mathrm{trop}}$. When $j = 0$, each such cone contains a curve $h'$ with $j=0$ passing through the $x_i$, yielding a genus $0$ curve $h': \tropC' \to \mathbb{R}^2$. We split the proof into cases depending on the deficiency of $\sigma$.
    
    If $\defic{\sigma} = 0$, then for dimension reasons, the curve $\tropC$ is trivalent. When $j=0$, the curve $\tropC'$ is trivalent (as it still interpolates the points $x_i$), thus for the cycle in $\tropC$ to span $\R^2$ we need the it to have three vertices and three edges (with no marked points).

    If $\operatorname{def}(\sigma) = 1$, then again at $j = 0$, the curve $\tropC'$ is trivalent and interpolates the points $x_i$. The result follows. Note that this proves only that the cycle can carry at most one marking. However, because the points $x_i$ are general, one can in fact exclude this possibility as well. Finally, every infinite edge has primitive direction vector, and so if one of the vertices $V_i$ in the cycle is 3-valent, the half-edge adjacent to $V_i$ not contained in the cycle cannot be an infinite end. The condition on edge lengths in the second dash comes from well-spacedness.

    If $\defic{\sigma} = 2$, the the cycle is contracted, and so there exists a contracted edge. By dimension considerations, there cannot be more than one contracted edge, so the cycle consists of a self-loop at a $4$ or $5$-valent vertex, which carries no marking (otherwise, the vertex would have to be 5-valent, in which case either the curve could not be well-spaced or $V$ could not be balanced).   
\end{proof}

\begin{proposition} \label{prop:smallj}
    Fix a genus 0 curve  $h': \tropC' \rightarrow \mathbb{R}^2$ through $x_1, \ldots, x_{3d-1}$,  and let $\sigma'$ be the maximal cone in $M^{\mathrm{trop}}_{\Gamma'_d}(\mathbb{R}^2)$ which contains $[h']$. Let $\mathcal{S}$ be the collection of maximal cones in $W_{\Gamma_d}(\mathbb{R}^2)$ corresponding to genus $1$ parametrized tropical curves passing through the same conditions with cycle length $j$, such that the curve with $j = 0$ in the same cone is given by $h': \tropC' \rightarrow \mathbb{R}^2$ after replacing the genus 1 vertex with a genus 0 vertex, and removing the resulting vertex if it is 2-valent). Then 
    $$\sum_{\sigma \in \mathcal{S}} m(\sigma) \cdot \mathrm{det_{ev \times j}}(\sigma) = \mathrm{det_{ev}}(\sigma') \cdot \sum_T (2\mathrm{Area}(T)^2- \frac{1}{2}),$$
    where $T$ runs over triangles of the Newton subdivision associated to $h': \tropC' \rightarrow \mathbb{R}^2$. 
\end{proposition}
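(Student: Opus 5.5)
The plan is to localize the computation at the genus-$0$ curve $h'$. By Proposition~\ref{prop: curves-small-j}, every cone $\sigma\in\mathcal{S}$ degenerates at $j=0$ to $h'$. The genus-$1$ vertex then sits either at a trivalent vertex $V$ of $\tropC'$ (dual triangle $T$) or at an interior point of a bounded edge $e$ of $\tropC'$; the latter covers the second columns of Tables~\ref{tab: comparing weights deficiency 1} and~\ref{tab: comparing weights deficiency 2}. I would therefore write
$$\sum_{\sigma\in\mathcal{S}} m(\sigma)\,\mathrm{det}_{\mathrm{ev}\times j}(\sigma)=\mathrm{det}_{\mathrm{ev}}(\sigma')\Big(\sum_V c_V+\sum_e c_e\Big),$$
where $c_V$ and $c_e$ are local contributions. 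Then I would compute each local term and finally redistribute the edge terms onto the triangles.

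\emph{Step 1: determinant comparison.} For each local type I would compare $\mathrm{det}_{\mathrm{ev}\times j}(\sigma)$ with $\mathrm{det}_{\mathrm{ev}}(\sigma')$ exactly as in the proof of Lemma~\ref{lem:def2}. Choose a lattice basis of $N_\sigma$ consisting of the edge lengths of $\tropC'$ together with the cycle edge lengths, with the well-spacedness equalities imposed. Expanding along the row of the cycle length reduces the matrix to that of $\mathrm{ev}(\sigma')$, up to an explicit local factor.

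\begin{itemize}
\item Deficiency $0$: $V$ is resolved into a triangle cycle with edges parallel to the three legs. By \cite[Lemma 4.10--4.11]{KM09}, the product $\mathrm{loop}(\sigma)\cdot(\text{local factor})$ is an explicit quadratic expression in the leg vectors $u_1,u_2,u_3$ of $V$.
\item Deficiency $1$, at a vertex: the cycle is a double edge parallel to some leg $u_i=w_i n_i$, split as $a n_i+bn_i$ with $a+b=w_i$, and attached to a $4$-valent vertex carrying the other two legs. Its weight is $\gcd(a,b)(|\det(u,n_i)|-1)$, times the local index.
\item Deficiency $1$, on an edge $e$: two trivalent vertices with equal-length legs, of weight $\gcd(a,b)$ for $a+b=w(e)$. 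Here the equal-length condition gives a factor $2$, as in Lemma~\ref{lem:def2}.
\item Deficiency $2$: the contribution is $i(T)$ at $V$, and $\frac{w(e)-1}{2}\cdot 2=w(e)-1$ on $e$.
\end{itemize}

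\emph{Step 2: summing over splittings.} Summing over ordered or unordered splittings $a+b=w$, with the automorphism factors $\Aut(\sigma')$ of Table~\ref{tab: comparing weights deficiency 1} taken into account, gives expressions in terms of $w$ and $|\det(u_j,n_i)|=2\mathrm{Area}(T)/w_i$. The edge contributions $c_e$ depend only on $w(e)$. Following the double-counting used in the proof of Proposition~\ref{prop:largej}, I would split each $c_e$ equally between the two triangles adjacent to $e$. This converts the data into per-triangle quantities in $b(T)$ and the weights $w_1,w_2,w_3$.

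\emph{Step 3: the local identity.} I would verify that, for each triangle $T$,
$$c_V+\tfrac12\sum_{e\ni V}c_e=2\mathrm{Area}(T)^2-\tfrac12.$$
To do this I would use Pick's theorem $\mathrm{Area}(T)=i(T)+b(T)/2-1$ and $b(T)=w_1+w_2+w_3$ to eliminate $i(T)$, and then expand the deficiency-$0$ quadratic term.

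\emph{Main obstacle.} I expect the main difficulty to be the deficiency-$1$ bookkeeping. First, the $\gcd(a,b)$ weights must combine with the lattice indices coming from the loop multiplicity. The natural route is that $\gcd(a,b)$ times the index of the split sublattice should telescope when summed over $a+b=w$. Second, the local contributions must come out to depend only on $\mathrm{Area}(T)$ and not on the individual $w_i$; the $-1$ in $|\det(u,n_1)|-1$ is presumably exactly what cancels the $w$-dependence. I would first test the identity on a primitive triangle ($w_i=1$, $\mathrm{Area}=\tfrac12$, target value $0$) and on a triangle with one edge of weight $2$. Only then would I attempt the general algebra, if necessary by comparing term by term with the computation in \cite[\S5]{KM09}, where the same total arises with different individual weights.
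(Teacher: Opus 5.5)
\textbf{Overall.} Your architecture is the paper's. The paper computes the deficiency $0,1,2$ contributions separately and then converts them to per-triangle quantities using $\sum_{e\ni V}(w(e)-1)=b(T_V)-3$ and Pick's theorem; your per-vertex bookkeeping is only a reordering of this. Your deficiency-$1$ and deficiency-$2$ inputs are correct.

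\textbf{The gap: the deficiency-$0$ term.} The cycle replacing $V$ does not have edges parallel to the legs. If the cycle vertices $W_1,W_2,W_3$ carry the legs $u_1,u_2,u_3$, balancing forces the cycle directions to be $a$, $a-u_2$, $a+u_1$ for a free lattice vector $a$. The choices for which the triangle closes with positive lengths are in bijection with the interior lattice points $p$ of $T$: the cycle is dual to the subdivision of $T$ by the three segments from $p$ to the vertices of $T$. Your per-cone value is fine: each such cone contributes $|\det(u_1,u_2)|\,\mathrm{det}_{\mathrm{ev}}(\sigma')=2\mathrm{Area}(T)\,\mathrm{det}_{\mathrm{ev}}(\sigma')$, independently of $p$. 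But there are $i(T)$ such cones, so the deficiency-$0$ part of $c_V$ is $2\,i(T)\,\mathrm{Area}(T)$, which is not a quadratic expression in the leg vectors. This factor $i(T)$ is exactly what lets Pick's theorem close the identity. With one cone per vertex, your own primitive-triangle test fails: it gives $1$ instead of $0$.

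\textbf{Completing Step 3.} Once you use the correct term, the rest goes as you anticipated, with one correction. In deficiency $1$ nothing needs to telescope: $\gcd(a,b)(a'+b')=a+b=w_e$ holds term by term. Write $A=\mathrm{Area}(T)$. Then a vertex-type cone gives $w_e(|\det(u,n_e)|-1)=2A-w_e$, and an edge-type cone gives $2w_e$. Summing over unordered splittings, with the $a=b$ term halved, yields $\tfrac{w_e-1}{2}(2A-w_e)$ for each pair $(V,e)$ and $(w_e-1)w_e$ for each edge. The $-w_e$ cancels against half of the edge term; this is the cancellation you were looking for. Step 3 then reads
\[
2i(T)A+\sum_{e\ni V}\Big(\tfrac{w_e-1}{2}(2A-w_e)+\tfrac{(w_e-1)w_e}{2}\Big)+i(T)+\tfrac12\sum_{e\ni V}(w_e-1)=2i(T)A+(b(T)-3)A+A-\tfrac12=2A^2-\tfrac12,
\]
where the middle step uses $i(T)+\tfrac{b(T)-3}{2}=A-\tfrac12$ from Pick's theorem.
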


We postpone the proof of this proposition until later. In the proof, we will distinguish cases according to Proposition \ref{prop: curves-small-j}. Let $\sigma'$ and $\mathcal{S}$ be as in Proposition~\ref{prop:smallj}. Write $\mathcal{S} = \mathcal{S}_0 \sqcup \mathcal{S}_1 \sqcup \mathcal{S}_2$, where $\mathcal{S}_i$ is the collection of maximal cones $\sigma \subset W_{\Gamma_d}(\mathbb{R}^2)$ contained in $\mathcal{S}$ and of deficiency $i$.

\begin{lemma}
    We have
    $$\sum_{\sigma \in \mathcal{S}_0} m(\sigma) \cdot \mathrm{det_{ev \times j}}(\sigma) = \mathrm{det_{ev}}(\sigma') \cdot \bigg[\sum_T i(T) \cdot 2 \mathrm{Area}(T) \bigg]$$
    where T goes over all triangles in the Newton subdivision dual to $h': \tropC' \rightarrow \mathbb{R}^2$.
\end{lemma}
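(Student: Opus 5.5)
The plan is to fix a trivalent vertex $V$ of $\tropC'$ with dual triangle $T$, and show two things. First, the cones in $\mathcal{S}_0$ whose cycle collapses to $V$ as $j\to 0$ are in bijection with the interior lattice points of $T$. Second, each such cone contributes exactly $2\mathrm{Area}(T)\cdot\mathrm{det_{ev}}(\sigma')$. Summing over $V$ (equivalently over $T$) then gives the lemma. By Proposition~\ref{prop: curves-small-j}, a cone $\sigma\in\mathcal{S}_0$ has a trivalent curve whose cycle is a triangle with three vertices, three bounded edges and no markings. As $j\to 0$ this cycle shrinks to a trivalent vertex $V$ of $\tropC'$, and the three non-cycle edges at the cycle vertices become the three edges $e_1,e_2,e_3$ of $\tropC'$ at $V$, with directions $u_1,u_2,u_3$ satisfying $\sum u_k=0$.

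\emph{Step 1 (combinatorial types).} Balancing at the three cycle vertices forces the cycle edge directions to be $w,\ w+u_1,\ w+u_1+u_2$ for some $w\in\Z^2$, oriented around the cycle. In the dual picture, this is the same as subdividing $T$ into three triangles by joining a lattice point $q$ to its vertices. The requirement that the cycle is an honest embedded triangle spanning $\R^2$ (deficiency $0$, all three lengths positive) is exactly that $q$ lies in the interior of $T$. I would also check the following points:
\begin{itemize}
\item the cyclic order of attachment of $e_1,e_2,e_3$ is forced by convexity, so no further types arise;
\item different interior points give different cones;
\item for small $j$ the points $x_i$, which lie away from $V$ by genericity, impose no further restriction.
\end{itemize}
This yields exactly $i(T)$ cones for $V$.

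\emph{Step 2 (the determinant).} Fix such a cone $\sigma$ and write $w_1,w_2,w_3$ for the cycle edge directions, so that $w_k$ is the vector from $q$ to a vertex of $T$, rotated by $90^\circ$. The lengths $(\ell_1,\ell_2,\ell_3)$ of the cycle edges solve $\sum\ell_kw_k=0$. This solution space is the ray spanned by $(A_1,A_2,A_3)$, where $A_k=|\det(w_{k+1},w_{k+2})|$ are twice the areas of the three subtriangles. Let $g=\gcd(A_1,A_2,A_3)$. I would take a lattice basis of $N_\sigma$ consisting of the bounded edge lengths of $\tropC'$ together with the primitive generator $\frac1g(A_1,A_2,A_3)$ of the cycle direction. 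The position of the cycle relative to the vertex is absorbed into the lengths of $e_1,e_2,e_3$, as in \cite[Lemma 4.10]{KM09}. In this basis the matrix of $(\mathrm{ev}\times j)(\sigma)$ is block triangular: one block is the matrix of $\mathrm{ev}(\sigma')$, and the other is the single entry of the $j$-row on the cycle generator, namely $\frac1g\sum A_k=\frac{2\mathrm{Area}(T)}{g}$. Hence $\mathrm{det_{ev\times j}}(\sigma)=\frac{2\mathrm{Area}(T)}{g}\,\mathrm{det_{ev}}(\sigma')$.

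\emph{Step 3 (loop multiplicity).} It remains to show $m(\sigma)=\mathrm{loop}(\sigma)=g$, that is, that the image of $\Z^3\to\Z^2$, $(\ell_k)\mapsto\sum\ell_kw_k$, has index $g$. The kernel of this map is the rank one lattice generated by $\frac1g(A_1,A_2,A_3)$. The index of the image is the gcd of the $2\times2$ minors of the matrix $(w_1\,w_2\,w_3)$, and these minors are $\pm A_1,\pm A_2,\pm A_3$. So the index is $g$, and multiplying gives $m(\sigma)\,\mathrm{det_{ev\times j}}(\sigma)=2\mathrm{Area}(T)\,\mathrm{det_{ev}}(\sigma')$, independent of $q$.

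I expect the main obstacle to be Step 1 together with the basis choice in Step 2. In Step 1 one must make sure that well-spacedness imposes nothing extra in deficiency $0$, and that no degenerate configurations are missed, such as $q$ on an edge of $T$ or marked points on the cycle. In Step 2 one must verify that the chosen generators really form a lattice basis of $N_\sigma$. I would mirror the argument of \cite[Lemma 4.10]{KM09}, where the analogous block-triangular reduction appears.
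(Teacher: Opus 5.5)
Your proposal is correct and is essentially the argument the paper imports wholesale from the proof of \cite[Lemma 7.1]{KM09}; the paper itself only observes that in deficiency $0$ the multiplicities of \cite{CK} and \cite{KM09} agree. In both, triangle cycles collapsing to $V$ correspond to interior lattice points $q$ of $T$, and each contributes $\mathrm{loop}(\sigma)\cdot\mathrm{det_{ev \times j}}(\sigma)=g\cdot\frac{2\mathrm{Area}(T)}{g}\,\mathrm{det_{ev}}(\sigma')$.

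The only thing to tidy is the basis in Step 2. Take the $\tropC$-lengths (not the $\tropC'$-lengths) of the non-cycle bounded edges together with $\frac1g(A_1,A_2,A_3)$. This is visibly a lattice basis of $N_\sigma$, and in it the $j$-row has a single nonzero entry. The $\tropC'$-lengths of $e_1,e_2,e_3$ differ from their $\tropC$-lengths by possibly non-integral multiples of the cycle parameter (for instance $\frac13$ when $T=\mathrm{conv}\{(0,0),(3,0),(0,3)\}$ and $q=(1,1)$). This change of coordinates is a determinant-one shear, though, so the value you obtain is unchanged.
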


\begin{proof}
    In the deficiency $0$ case the multiplicities are the same as in \cite{KM09}, so the result follows from the proof of \cite[Lemma 7.1]{KM09}. 
\end{proof}

\begin{lemma} \label{lem: def1}
    We have
    $$\sum_{\sigma \in \mathcal{S}_1} m(\sigma) \cdot \mathrm{det_{ev \times j}}(\sigma) = \mathrm{det_{ev}}(\sigma') \cdot \bigg[\sum_T 
    (b(T)-3) \cdot \mathrm{Area}(T) \bigg]$$
    where T goes over all triangles in the Newton subdivision dual to $h': \tropC' \rightarrow \mathbb{R}^2$.
\end{lemma}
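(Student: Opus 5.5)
The plan is to organize the deficiency-$1$ cones in $\mathcal{S}_1$ by the degeneration at $j=0$ described in Proposition~\ref{prop: curves-small-j}. When $j\to 0$, the two parallel cycle edges, with directions $a n_1$ and $b n_1$ ($n_1$ primitive, $a,b\geq 1$), collapse onto a bounded edge $e$ of $\tropC'$ of weight $w(e)=a+b$. In the first case of Table~\ref{tab: comparing weights deficiency 1}, the $4$-valent vertex $V_1$ of the cycle degenerates to a trivalent vertex $V$ of $\tropC'$ adjacent to $e$; call $T$ the triangle dual to $V$. In the second case, the cycle sits in the interior of $e$ with two equal-length pieces on either side. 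So $\sigma'$ determines the cones in $\mathcal{S}_1$ as follows: type A is determined by a flag $(V,e)$ with $V$ an endpoint of the bounded edge $e$, together with a decomposition $a+b=w(e)$; type B is determined by a bounded edge $e$ together with a decomposition $a+b=w(e)$. Unordered decompositions are taken, and the automorphism factor $\Aut(\sigma')$ (of order $2$ when $a=b$) is absorbed by summing over ordered pairs with a factor $\tfrac12$.

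Next I would compute $\mathrm{det}_{\mathrm{ev}\times j}(\sigma)$ by the same column manipulation as in Lemma~\ref{lem:def2}. The closing condition $a\ell_1=b\ell_2$ on the cycle edge lengths gives a rank-one lattice generated by $(\ell_1,\ell_2)=(b,a)/\gcd(a,b)$. The cycle length $j=\ell_1+\ell_2$ is then $(a+b)/\gcd(a,b)$ times the generator. After using the $j$-row to eliminate this generator, the remaining matrix is that of $\mathrm{ev}(\sigma')$, up to the change of variables relating the length of $e$ to the lengths in $\tropC$. My expectation is the following:
\begin{itemize}
\item For type A, $\gcd(a,b)\cdot \mathrm{det}_{\mathrm{ev}\times j}(\sigma)=w(e)\,\mathrm{det}_{\mathrm{ev}}(\sigma')$.
\item For type B, well-spacedness forces the two pieces of $e$ to have equal length, which produces an additional factor $2$ exactly as in Lemma~\ref{lem:def2}, so $\gcd(a,b)\cdot\mathrm{det}_{\mathrm{ev}\times j}(\sigma)=2w(e)\,\mathrm{det}_{\mathrm{ev}}(\sigma')$.
\end{itemize}
The $\gcd(a,b)$ in $m(\sigma)$ should cancel exactly against the index coming from the cycle lattice. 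Verifying this cancellation carefully, including that no extra index appears from the position of $V_2$ along $e$, is the step I expect to be the main obstacle.

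Granting this, the summation is elementary. Write $w=w(e)$ and use $|\det(u,n_1)|=2\mathrm{Area}(T)/w$. The type A contribution of a flag $(V,e)$ is
$$\tfrac12\sum_{a=1}^{w-1} w\Big(\tfrac{2\mathrm{Area}(T)}{w}-1\Big)=(w-1)\mathrm{Area}(T)-\tfrac{w(w-1)}{2}.$$
The type B contribution of $e$ is $\tfrac12\sum_{a=1}^{w-1}2w=w(w-1)$. I would split this equally between the two endpoints of $e$, giving $\tfrac{w(w-1)}{2}$ to each flag, which exactly cancels the negative term in the type A contribution. Each flag $(V,e)$ then contributes $(w(e)-1)\mathrm{Area}(T)$. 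Summing over the three edges of $T$ and using $\sum_{e\subset T}(w(e)-1)=b(T)-3$ gives the claimed formula. Unbounded edges contribute nothing, since they have weight $1$; this agrees with the exclusion of infinite ends in Proposition~\ref{prop: curves-small-j}.
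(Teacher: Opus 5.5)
Your proposal is correct and follows essentially the same route as the paper. You use the same split into cones with a $4$-valent cycle vertex, indexed by flags $(V,e)$, and cones with two $3$-valent cycle vertices, indexed by bounded edges $e$. You use the same determinant identities $\mathrm{det_{ev\times j}}(\sigma)=(a'+b')\,\mathrm{det_{ev}}(\sigma')$ and $\mathrm{det_{ev\times j}}(\sigma)=2(a'+b')\,\mathrm{det_{ev}}(\sigma')$; for the first the paper cites \cite[Lemma 4.12]{KM09}, and for the second it uses the same factor-$2$ column argument you describe. You also use the same cancellation of the $-w(w-1)/2$ terms against the contribution of the $3$-valent cones before regrouping by vertices.

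The only differences are presentational. Summing over ordered pairs $(a,b)$ with a factor $\tfrac12$ treats the $a=b$ automorphism uniformly and avoids the paper's separate odd/even computations. You also split the $3$-valent contribution equally between the two flags of $e$, where the paper combines everything edge by edge.
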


This is different from the corresponding deficiency $1$ discussion in \cite{KM09}.

\begin{proof}
    We split the discussion into cases as in the two dashes in the second bullet of Proposition \ref{prop: curves-small-j}. 
    
    Suppose we wish to construct from $h'$ a genus 1 parametrized tropical curve with small loop length $j$, of deficiency 1, where one of the vertices in the loop is 4-valent. There are several ways to do this. We first choose an edge $e$ in $E(\tropC')$, and a vertex $V \in V(\tropC')$ adjacent to $e$. Let $a, b \in \mathbb{Z}_{> 0}$ such that $a<b$ and $a + b = w(e)$. For each such $a,b$, we can form $h: \tropC \rightarrow \mathbb{R}^2$ in a unique way so that the cycle contains the vertex $V$, it has one edge of weight $a$ and another of weight $b$ and its image is contained in that of $e$. See the left hand side of Table \ref{tab: comparing weights deficiency 1}. 
    
    Let $\sigma$ be the corresponding cone in $\W_{\Gamma}(\mathbb{R}^2)$. Let $u$ be the primitive direction of edge $e$, and let $v$ be the direction vector of any other edge adjacent to $V$. 

    As explained in Table \ref{tab: comparing weights deficiency 1}, the multiplicity of $\sigma$ is 
    $$m(\sigma) = \begin{cases}
        \gcd(a,b) \cdot (|\det(u,v)| - 1) \quad \text{if } a \neq b \\
        \gcd(a,b) \cdot (|\det(u,v)| - 1)/2 \quad \text{if } a = b.
    \end{cases}$$
    Let $a', b'$ be such that $a = \gcd(a,b)\cdot a'$ and $b = \gcd(a,b) \cdot b'$. From the proof of \cite[Lemma 4.12]{KM09}, we have 
    $$
    \mathrm{det_{ev \times j}}(\sigma) = 
        (a' + b') \cdot \mathrm{det_{ev}}(\sigma') 
    $$
    If $w(e)$ is odd, the contribution to the sum in the statement coming from our choice of $V$ and $e$ is
    \begin{align*}
        & \mathrm{det_{ev}}(\sigma') \sum_{0<a< b, a+b = w(e)} \gcd(a,b) \cdot (|\det(u,v)| - 1) \cdot (a' + b') \\
        &=\mathrm{det_{ev}}(\sigma')  \sum_{0<a< b, a+b = w(e)}  (|\det(u,v)| - 1) \cdot w(e)  \\
        &= \mathrm{det_{ev}}(\sigma')  \sum_{0<a< b, a+b = w(e)} \bigg( 2 \mathrm{Area}(T) - w(e) \bigg)\\
        &= \mathrm{det_{ev}}(\sigma')  \cdot \frac{w(e) - 1}{2} \cdot \bigg( 2 \mathrm{Area}(T) - w(e) \bigg)
    \end{align*} 
    where $T$ is the triangle associated to $V$ in the Newton subdivision of $h': \tropC \rightarrow \mathbb{R}^2$. 

    If instead $w(e)$ is even, the total contribution is 
     \begin{align*}
        & \mathrm{det_{ev}}(\sigma') \bigg[\sum_{0<a< b, a+b = w(e)} \bigg(\gcd(a,b) \cdot (|\det(u,v)| - 1) \cdot (a' + b') \bigg) \\
        &\hspace{5cm} + \frac{\mathrm{gcd}(\frac{w(e)}{2},\frac{w(e)}{2}) \cdot (|\det(u,v)| - 1) (1+1)}{2}  \bigg] \\
        &=\mathrm{det_{ev}}(\sigma') \bigg[ \cdot \frac{w(e) - 2}{2} \cdot \bigg( 2 \mathrm{Area}(T) - w(e) \bigg) +\frac{1}{2}\bigg( 2 \mathrm{Area}(T) - w(e) \bigg)\bigg] \\
        &= \mathrm{det_{ev}}(\sigma')  \cdot \frac{w(e) - 1}{2} \cdot \bigg( 2 \mathrm{Area}(T) - w(e) \bigg)
    \end{align*} 

    Suppose now we wish to construct a deficiency 1 genus 1 parametrized tropical curve $h: \tropC \rightarrow \mathbb{R}^2$ where both the vertices in the loop are 3-valent. There is one such construction for every $e$ in $E(\tropC')$ and $a,b \in \mathbb{Z}_{>0}$ with $a<b$ and $a+b = w(e)$. Let $\sigma$ again be the corresponding cone in $\W_\Gamma(\mathbb{R}^2)$. As explained in Table \ref{tab: comparing weights deficiency 1}, the multiplicity of $\sigma$ is 
    $$m(\sigma) = \begin{cases}
        \gcd(a,b) \quad \text{if } a \neq b \\
        \gcd(a,b)/2 \quad \text{if } a = b.
    \end{cases}.$$
    Let $a', b'$ be such that $a = \gcd(a,b)\cdot a'$ and $b = \gcd(a,b) \cdot b'$. We claim that 
    $$
    \mathrm{det_{ev \times j}}(\sigma) = 
        2(a' + b') \cdot \mathrm{det_{ev}}(\sigma').  
    $$
    Indeed, let $e_1, e_2 \in E(\tropC)$ be the edges adjacent to the vertices $V_1, V_2$ (the vertices of the cycle), not contained in the cycle. A lattice basis of $N_\sigma$ in this case is given by the edge lengths of the bounded edges which are not $e_1$ or $e_2$, the length of $e_1$ (which by well-spacedness must be equal to the length of $e_2$), and a lattice basis of $\{(x,y) \in \mathbb{Z}^2| ax = by\}$. The last element comes from the lengths of the two edges in the cycle, with the condition that the cycle must close. Explicitly, we take the element $X = (b', a') \in \mathbb{Z}^2$. 
    
    Use this basis to obtain a matrix representation of the linear map
    $$
    (\mathrm{ev} \times j )(\sigma): N_\sigma  \to (\Z^2)^n \times \Z. 
    $$
    The row corresponding to $j$ consists of one entry $a' + b'$ in the column corresponding to $X$. Deleting this row and column gives a matrix representation of $\mathrm{ev}(\sigma'): N_{\sigma'} \to (\Z^2)^n$, except the column corresponding to the length of $e_1$ is replaced by a column corresponding to the length of $e$. The former column is twice the latter column, and the claim follows. 

    If $w(e)$ is odd, the corresponding contribution is 
    \begin{align*}
        & \mathrm{det_{ev}}(\sigma') \sum_{0<a< b, a+b = w(e)} \gcd(a,b) \cdot 2 (a' + b') \\
        &=\mathrm{det_{ev}}(\sigma')  \cdot (w(e)-1)\cdot w(e). 
    \end{align*} 
    A similar calculation gives the same result for $w(e)$ odd.

    Now, let $e$ be a bounded edge of $\tropC'$ with adjacent vertices $V_1$ and $V_2$, corresponding to triangles $T_1, T_2$ in the Newton subdivision. By the above discussion, the contribution to the sum in the statement of the lemma coming from the edge $e$ and the pairs $(V_1,e)$ and $(V_2,e)$ is
    $$\mathrm{det_{ev}}(\sigma')  \cdot \frac{w(e) - 1}{2} \cdot  2( \mathrm{Area}(T_1 ) +  \mathrm{Area}(T_2 )).$$
    Summing over every edge of $\tropC'$ (including unbounded ones where anyway $w(e)-1$ vanishes) one obtains  
    \begin{align*}
        & \mathrm{det_{ev}}(\sigma')  \sum_{V \in V(\tropC')}  \bigg[\sum_{e \text{ adjacent to $V$}}(w(e) - 1) \bigg] \cdot  \mathrm{Area}(T_V) 
    \end{align*}
    where $T_V$ denotes the triangle in the Newton subdivision of $h'$ corresponding to $V$.
    The result follows since $\sum_{e \text{ adjacent to $V$}}(w(e) - 1) = b(T_V)-3$. 
\end{proof}

\begin{lemma}
    We have
    $$\sum_{\sigma \in \mathcal{S}_2} m(\sigma) \cdot \mathrm{det_{ev \times j}}(\sigma) = \mathrm{det_{ev}}(\sigma') \cdot \bigg[\sum_T 
    \big(\mathrm{Area}(T) - \frac{1}{2} \big)\bigg]$$
    where T goes over all triangles in the Newton subdivision dual to $h': \tropC' \rightarrow \mathbb{R}^2$.
\end{lemma}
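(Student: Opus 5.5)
We sort the cones in $\mathcal{S}_2$ by the two shapes of Table~\ref{tab: comparing weights deficiency 2}, which are the only deficiency-$2$ possibilities by the third bullet of Proposition~\ref{prop: curves-small-j}. We then reuse the computations from the proof of Lemma~\ref{lem:def2}. The only new input is the counting identity that turns $i(T)$ and $(w(e)-1)/2$ into $\mathrm{Area}(T)-\frac12$.

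\emph{Step 1: the $5$-valent case.} Here the contracted self-loop sits at a vertex $V$ of $\tropC$. Deleting the loop gives a trivalent vertex of $\tropC'$, dual to a triangle $T$. Conversely, for each vertex of $\tropC'$ we can attach a contracted loop of length $j$, and this produces exactly one such cone $\sigma$. Its multiplicity is $m(\sigma)=i(T)$ by Table~\ref{tab: comparing weights deficiency 2}. Its determinant is $\mathrm{det_{ev\times j}}(\sigma)=\mathrm{det_{ev}}(\sigma')$, by the argument in the proof of Lemma~\ref{lem:def2}: the loop-length column has a single nonzero entry, $1$, in the $j$-row. Since $\tropC'$ is simple, there are no marked points at vertices, so every vertex qualifies. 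This case therefore contributes $\mathrm{det_{ev}}(\sigma')\sum_T i(T)$.

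\emph{Step 2: the $4$-valent case.} The loop is attached at a point in the interior of an edge $e$ of $\tropC'$, and well-spacedness forces the two adjacent edges $e_1,e_2$ to have equal length. For small $j$ these curves still pass through the $x_i$, so the point is the midpoint of $e$. Hence $e$ must be bounded, and for general $x_i$ no marking lies at the midpoint. Each bounded edge gives exactly one cone, with $m(\sigma)=(w(e)-1)/2$. As in Lemma~\ref{lem:def2}, replacing the length of $e$ by the common length $\ell(e_1)=\ell(e_2)$ doubles one column, so $\mathrm{det_{ev\times j}}(\sigma)=2\,\mathrm{det_{ev}}(\sigma')$. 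Unbounded edges have $w(e)=1$ and contribute zero anyway. The total from this case is $\mathrm{det_{ev}}(\sigma')\sum_{e}(w(e)-1)$.

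\emph{Step 3: combinatorics.} As in the proof of Proposition~\ref{prop:largej}, each edge $e$ of $\tropC'$ is dual to a segment of the subdivision with $w(e)-1$ interior lattice points. This gives $\sum_e (w(e)-1)=\sum_T \frac{b(T)-3}{2}$. The one subtlety is whether each segment should be counted once or twice when two triangles share it. We resolve it by summing $\frac{b(T)-3}{2}$ over triangles, where each triangle sees the $w-1$ interior points on each of its sides, while noting that sides shared with parallelograms or the outer boundary are absorbed in the same way as in Proposition~\ref{prop:largej}. Pick's theorem then gives
\[
\sum_T\Bigl(i(T)+\frac{b(T)-3}{2}\Bigr)=\sum_T\Bigl(\mathrm{Area}(T)-\frac12\Bigr),
\]
since $\mathrm{Area}(T)=i(T)+b(T)/2-1$.

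\emph{Main obstacle.} The delicate point is the bookkeeping in Step 3. The factor $2$ in the determinant and the $1/2$ in $m(\sigma)$ must combine so that each triangle receives exactly $\frac{b(T)-3}{2}$. To settle this, we will recheck the identity $\sum_e(w(e)-1)=\sum_T\frac{b(T)-3}{2}$ from Proposition~\ref{prop:largej} under the same edge-to-triangle convention used there. A secondary check is that small $j$ excludes no additional configurations beyond those listed in Proposition~\ref{prop: curves-small-j}.
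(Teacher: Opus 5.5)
Your proposal is correct and follows the paper's proof. The paper cites Lemma~\ref{lem:def2}, which you re-derive in Steps 1--2 ($m(\sigma)=i(T)$ with $\mathrm{det_{ev\times j}}(\sigma)=\mathrm{det_{ev}}(\sigma')$ at vertices, and $m(\sigma)=(w(e)-1)/2$ with $\mathrm{det_{ev\times j}}(\sigma)=2\,\mathrm{det_{ev}}(\sigma')$ at edge midpoints), and then applies $\sum_e(w(e)-1)=\sum_T\frac{b(T)-3}{2}$ and Pick's theorem exactly as you do. The bookkeeping worry in Step 3 goes away if you count through the two endpoint vertices of each bounded edge of $\tropC'$ instead of through shared segments of the subdivision: each endpoint sees the $w(e)-1$ interior lattice points on the matching side of its dual triangle, whatever parallelograms lie in between.
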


\begin{proof}
    By Lemma \ref{lem:def2}, we have
    $$\sum_{\sigma \in \mathcal{S}_2} m(\sigma) \cdot \mathrm{det_{ev \times j}}(\sigma) = \mathrm{det_{ev}}(\sigma') \cdot \bigg[\sum_{T} i(T) + \sum_{e \in E(\tropC')} (w(e) - 1)\bigg].$$
    Noticing that $\sum_{e \in E(\tropC')} (w(e) - 1) = \sum_{T} (b(T)-3)/2$, the result follows by Pick's theorem. 
    
\end{proof}

We can finally prove Proposition \ref{prop:smallj} and the rest of Theorem \ref{thm: main}.

\begin{proof}[Proof of Proposition \ref{prop:smallj}]
    Summing the contributions from $\mathcal{S}_0, \mathcal{S}_1$, and $\mathcal{S}_2$ we obtain (similarly to the proof of \cite[Lemma 7.1]{KM09})
    \begin{align*}
        & \sum_{\sigma \in \mathcal{S}} m(\sigma) \cdot \mathrm{det_{ev \times j}}(\sigma) \\
        &= \mathrm{det_{ev}}(\sigma') \cdot \bigg[\sum_T i(T) \cdot 2 \mathrm{Area}(T) + \sum_T (b(T)-3) \cdot \mathrm{Area}(T)  + \sum_T 
    \big(\mathrm{Area}(T) - \frac{1}{2} \big)\bigg] \\
        &= \mathrm{det_{ev}}(\sigma') \cdot  \sum_T \bigg(2\big(i(T)+ \frac{b(T)}{2} -1\big)\cdot \mathrm{Area}(T) -  \frac{1}{2} \bigg) \\
        &= \mathrm{det_{ev}}(\sigma') \cdot \sum_T \bigg(2\mathrm{Area}(T)^2 -  \frac{1}{2} \bigg), 
    \end{align*}
    where the last line uses Pick's theorem. 
\end{proof}

\begin{proof}[End of proof of Theorem \ref{thm: main}]
Theorem \ref{thm: main} follows immediately from Proposition \ref{prop:smallj} after summing over all rational tropical curves $\tropC'$ through the $3d-1$ points $x_i$.
    
\end{proof}

\bibliographystyle{amsalpha}
\bibliography{library}

$\,$\
\noindent

$\,$\
\noindent
\textsc{Department of Pure Mathematics {\it \&} Mathematical Statistics, 
University of Cambridge, Cambridge, UK}

\textit{e-mail address:} \href{mailto:ac2758@cam.ac.uk}{ac2758@cam.ac.uk}

$\,$\
\noindent

$\,$\
\noindent
\textsc{Department of Pure Mathematics {\it \&} Mathematical Statistics, 
University of Cambridge, Cambridge, UK}

\textit{e-mail address:} \href{mailto:sk2050@cam.ac.uk}{sk2050@cam.ac.uk}

\end{document}